\newtheorem{theorem}{Theorem}[section]
\newtheorem{lemma}[theorem]{Lemma}
\newtheorem{observation}[theorem]{Observation}
\newtheorem{proposition}[theorem]{Proposition}
\newtheorem{fact}[theorem]{Fact}
\newtheorem*{theorem*}{Theorem} 
\theoremstyle{definition}
\newtheorem{definition}[theorem]{Definition}
\newtheorem{question}[theorem]{Question}
\theoremstyle{remark}
\newcounter{claimcounter}
\numberwithin{claimcounter}{theorem}
\newtheorem{claimno}[claimcounter]{Claim}
\setlist[description]{leftmargin=5.5em,labelindent=\parindent}
\renewcommand{\P}{\mathbb{P}}
\newcommand{\Q}{\mathbb{Q}}
\newcommand{\R}{\mathbb{R}}
\renewcommand{\c}{\mathfrak{c}}
\newcommand{\PFA}{\textup{\ensuremath{\textsf{PFA}}}}
\newcommand{\MA}{\textup{\ensuremath{\textsf{MA}}}}
\newcommand{\ZFC}{\textup{\ensuremath{\textsf{ZFC}}}}
\newcommand{\BSCFA}{\textup{\ensuremath{\textsf{BSCFA}}}}
\newcommand{\BPFA}{\textup{\ensuremath{\textsf{BPFA}}}}
\newcommand{\CH}{\textup{\textsf{CH}}}
\newcommand{\MP}{\textup{\ensuremath{\textsf{MP}}}}
\newcommand{\bfMP}{\textup{\ensuremath{\textsf{\textbf{MP}}}}}
\newcommand{\RA}{\textup{\ensuremath{\textsf{RA}}}}
\newcommand{\bfRA}{\textup{\ensuremath{\textsf{\textbf{RA}}}}}
\newcommand{\HC}{\textup{\ensuremath{H_{\omega_1}}}}
\DeclareMathOperator{\Coll}{\mathcal C\textit{oll}\,}
\newcommand{\st}{\; | \;}
\newcommand{\set}[2]{\left\{#1\st #2 \right\}}
\newcommand{\seq}[2]{\langle #1 \st #2 \rangle}
\newcommand{\Ptail}{\P_{\mathrm{tail}}}
\newcommand{\Gtail}{G_{\mathrm{tail}}}
\newcommand{\forces}{\Vdash}
\newcommand{\proves}{\vdash}
\DeclareMathOperator{\MPsc}{ \textup{\textsf{MP}}_{\textit{sc}}}
\newcommand{\bfMPsc}{\textbf{\textup{\textsf{MP}}}_{\textit{sc}}}
\DeclareMathOperator{\MPc}{\textup{\textsf{MP}}_{\textit{c}}}
\newcommand{\bfMPc}{\textbf{\textup{\textsf{MP}}}_{\textit{c}}}
\DeclareMathOperator{\MPp}{\textup{\textsf{MP}}_{\textit{p}}}
\newcommand{\bfMPp}{\textbf{\textup{\textsf{MP}}}_{\textit{p}}}
\DeclareMathOperator{\MPccc}{\textup{\textsf{MP}}_{\textit{ccc}}}
\newcommand{\bfMPccc}{\textbf{\textup{\textsf{MP}}}_{\textit{ccc}}}
\newcommand{\bflMPsc}{ \textup{\textsf{\textbf{LMP}}}_{\textit{sc}} }
\newcommand{\bflMPc}{ \textup{\textsf{\textbf{LMP}}}_{\textit{c}} }
\newcommand{\bflMPccc}{ \textup{\textsf{\textbf{LMP}}}_{\textit{ccc}} }
\newcommand{\bflMPp}{ \textup{\textsf{\textbf{LMP}}}_{\textit{p}} }
\newcommand{\lMP}[2]{ \textup{\textsf{MP}}^{H_{#2}}_{#1}(H_{#2})}
\newcommand{\bflMP}{ \textup{\textsf{\textbf{LMP}}} }
\newcommand{\BFA}[2]{\textup{\textsf{BFA}}_{#1}({#2})}
\DeclareMathOperator{\RAsc}{\textup{\textsf{RA}}_{\textit{sc}}}
\DeclareMathOperator{\RAc}{\textup{\textsf{RA}}_{\textit{c}}}
\DeclareMathOperator{\RAp}{\textup{\textsf{RA}}_{\textit{p}}}
\DeclareMathOperator{\RAccc}{\textup{\textsf{RA}}_{\textit{ccc}}}
\DeclareMathOperator{\bfRAsc}{\textup{\textsf{\textbf{RA}}}_{\textit{sc}}}
\DeclareMathOperator{\bfRAc}{\textup{\textsf{\textbf{RA}}}_{\textit{c}}}
\DeclareMathOperator{\bfRAccc}{\textup{\textsf{\textbf{RA}}}_{\textit{ccc}}}
\DeclareMathOperator{\bfRAp}{\textup{\textsf{\textbf{RA}}}_{\textit{p}}}
\newcommand{\TC}[1]{\mathrm{TC}(\{ #1 \})}
\begin{document}
\title{Combining Resurrection and Maximality}
\author{Kaethe Minden}
 \address[K.~Minden]{Bard College at Simon's Rock \\ 84 Alford Road \\ Great Barrington, MA 01230}
\email{kminden@simons-rock.edu}
\urladdr{https://kaetheminden.wordpress.com/}
\date{}     					
\thanks{Some of the material presented here is based on the author's doctoral thesis \cite{Minden:2017fr}, written at the Graduate Center of CUNY under the supervision of Gunter Fuchs. The author would like to thank the referee for their suggestions and for pointing out key oversights in the initial draft.}

\subjclass[2010]{03E40, 03E55}
\keywords{subcomplete forcing, proper forcing, the maximality principle, the resurrection axiom}

\begin{abstract}
It is shown that the resurrection axiom and the maximality principle may be consistently combined for various iterable forcing classes. The extent to which resurrection and maximality overlap is explored via the local maximality principle.
\end{abstract}
\maketitle

\section{Introduction}
The maximality principle (\textsf{MP}) was originally defined by Stavi and V\"a\"an\"anen \cite{Stavi:2001qv} for the class of $ccc$ forcings. Maximality principles were defined in full generality by Hamkins~\cite{Hamkins:2003jk}, and expanded upon for different classes of forcing notions by Fuchs~\cite{Fuchs:2008rt, Fuchs:2008ve},  and Leibman~\cite{Leibman:MP}.
The axiom $\MP$ states that if a sentence may be forced in such a way that it remains true in every further forcing extension, then it must have been true already, in the original ground model. The resurrection axiom ($\RA$) is due to Hamkins and Johnstone \cite{Hamkins:2013qv}. Very roughly speaking, $\RA$ posits that no matter how you force, it is always possible to force again to ``resurrect" the validity of certain sentences - meaning a statement may not be true after some forcing, but there is always a further forcing which will undo this harm. In fact the axiom grants a bit more than this, and posits an amount of elementarity between the two-step extension and the ground model.

A forcing class $\Gamma$ is generally meant to be definable, closed under two-step iterations, and to contain trivial forcing. 
Resurrection and maximality may hold for more forcing classes than forcing axioms can, while they tend to imply their relevant bounded forcing axiom counterparts. In order for a forcing axiom to make sense for a particular class of forcing notions, the forcings should preserve stationary subsets of $\omega_1$. However, this restriction does not exist for the resurrection axiom or the maximality principle.

A question reasonable to ask about any forcing class is whether or not the resurrection axiom and the maximality principle may consistently both hold for that class. I answer the question positively and show that the consistency strength of the combined boldface principles together is that of a strongly uplifting fully reflecting cardinal. 

Perhaps many forcing classes are ignored in this paper. The focus is on forcing classes containing forcing notions which may potentially either collapse cardinals (to $\aleph_1$ or larger), add reals, or both. I also want to look at forcing classes which have a corresponding forcing axiom. Thus the focus is on proper, $ccc$, countably closed, subcomplete\footnote{See~\cite{Jensen:2014} and \cite{Minden:2017fr} for the definition of subcomplete forcing and its properties.}, and the class of all forcing notions. The results stated here for proper forcing should work similarly for semiproper and subproper forcing. Future work may certainly flesh out important distinctions.

In section \ref{sec:MP} the definition of the maximality principle is given for various classes of forcing, and the relevant equiconsistency result is stated. The same is done for the resurrection axiom in section \ref{sec:RA}. In section \ref{sec:RA+MP}, it is shown that the two may consistently be combined. In \ref{sec:LMP}, the local maximality principle is introduced as a natural axiom similar to a bounded forcing axiom but stronger, which both the maximality principle and the resurrection axiom imply. The consistency of local maximality is established at the end of the section.

\section{The Maximality Principle}
\label{sec:MP}
One motivation behind maximality principles is their connection to modal logic. In modal logic, necessary ($\Box$) and possible ($\lozenge$) are modal operators. In our context we interpret ``possible" as forceable, or true in some forcing extension, and ``necessary" as true in every forcing extension.

\begin{definition}
Let $\Gamma$ be a forcing class defined by a formula (to be evaluated in the forcing extensions in cascaded modal operator uses). 

We say that a sentence $\varphi(\vec a)$ is $\Gamma$-\emph{forceable} if there is $\P \in \Gamma$ such that for every $q \in \P$, we have that $q \forces \varphi(\vec a)$. In other words, a statement is $\Gamma$-forceable if it is forced to be true in an extension by a forcing from $\Gamma$. 

A sentence $\varphi(\vec a)$ is $\Gamma$-\emph{necessary} if for all $\P \in \Gamma$ and all $q \in \P$, we have that $q \forces \varphi(\vec a)$. So a sentence is $\Gamma$-necessary if it holds in any forcing extension by a forcing notion from $\Gamma$. If $\Gamma$ contains the trivial forcing then a statement being $\Gamma$-necessary implies that it is true. 

If $S$ is a term in the language of set theory, then the \emph{Maximality Principle} for $\Gamma$ with parameters from $S$, which we denote $\textsf{MP}_{\Gamma}(S)$, is the scheme of formulae stating that every sentence with parameters $\vec a$ from $S$ that is $\Gamma$-forceably $\Gamma$-necessary is true. I.e., if the sentence ``$\varphi(\vec a)$ is $\Gamma$-necessary" is $\Gamma$-forceable, then $\varphi(\vec a)$ is true. In brief, the maximality principle posits $\lozenge \Box \varphi \implies \varphi$.
\end{definition}

Write $\MPsc$ to stand for $\textsf{MP}_\Gamma$ where $\Gamma=\set{ \P }{ \P \text{ is subcomplete} }$, $\MPc$ in the case where $\Gamma$ is countably closed forcings, $\MPp$ for the class of proper forcings, and $\MPccc$ for the class of $ccc$ forcings. We leave out $\Gamma$ if we are considering all forcing notions. 
Since all of these classes of forcing notions $\Gamma$ are closed under two-step iterations and contain trivial forcing, it follows that $\textsf{MP}_\Gamma$ is equivalent to the statement that every sentence that is $\Gamma$-forceably $\Gamma$-necessary is $\Gamma$-necessary. 
 
Before moving on, it should be noted that it of course does not technically make sense to write $\textsf{MP}_\Gamma \implies P$ for some proposition $P$ in the language of set theory, since $\MP_{\Gamma}$ is a scheme. When something like this is written, it should be interpreted as saying instead $\ZFC+\textsf{MP}_\Gamma \proves P$.

First we analyze the parameter set $S$ that may be allowed in the definition. Since being hereditarily countable is forceably necessary  (\cite[Obs.~3]{Hamkins:2003jk}), it is clear that $S = \HC$ is the natural parameter set for the maximality principle for the class of all forcing notions. We will write $\bfMP$ for the boldface version of the maximality principle for all forcing, i.e.,  $\bfMP=\MP(H_{\omega_1})$. For the case where we consider $\MPc$ as in \cite{Fuchs:2008rt}, the natural parameter set to use is $H_{\omega_2}$. The same is true for $\MPsc$.
The next lemma follows Fuchs \cite[Thm.~2.6]{Fuchs:2008rt}.

\begin{lemma} \label{lemma:MPscparams}
Let $\Gamma$ be a forcing class containing forcing notions which collapse arbitrarily large cardinals to $\omega_1$. Then $\MP_{\Gamma}$ cannot be consistently strengthened by allowing parameters that aren't in $H_{\omega_2}$. In particular, $$\MP_\Gamma(S) \implies S \subseteq H_{\omega_2}.$$\end{lemma}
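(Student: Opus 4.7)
My plan is to prove the contrapositive: assuming some $a \in S$ lies outside $H_{\omega_2}$ (equivalently, $|\TC{a}| \geq \omega_2$), I will exhibit a single formula $\varphi(x)$ with parameter $a$ that is $\Gamma$-forceably $\Gamma$-necessary yet fails in $V$. Then $\MP_\Gamma(S)$, applied with the admissible parameter $a$, yields an immediate contradiction. The natural candidate for $\varphi$ is the statement
\[
    \varphi(a) \;\equiv\; |\TC{a}| \leq \aleph_1,
\]
which is obviously false in $V$ under our assumption on $a$.

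For the forceability half, I would invoke the hypothesis on $\Gamma$: pick any regular $\kappa > |\TC{a}|$ and let $\P \in \Gamma$ be a forcing that collapses $\kappa$ to $\omega_1$. Then every condition $q \in \P$ forces that $|\kappa| = \aleph_1$, and hence (since $|\TC{a}| < \kappa$) every $q$ forces $\varphi(a)$. So $\varphi(a)$ is $\Gamma$-forceable in the strong sense required by the definition.

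For the necessity half, I would argue that in the extension $V[G]$ by $\P$, the set $\TC{a}$ is the same set as in $V$ but now has cardinality at most $\omega_1^{V[G]}$. Cardinalities of a fixed set can only weakly decrease under further forcing, so for any $\mathbb{Q} \in \Gamma^{V[G]}$ and any generic $H \subseteq \mathbb{Q}$ over $V[G]$, we have
\[
    |\TC{a}|^{V[G][H]} \;\leq\; |\TC{a}|^{V[G]} \;\leq\; \omega_1^{V[G]} \;\leq\; \omega_1^{V[G][H]},
\]
so $\varphi(a)$ continues to hold. Thus $\varphi(a)$ is $\Gamma$-necessary in $V[G]$, and combined with the previous paragraph, $\varphi(a)$ is $\Gamma$-forceably $\Gamma$-necessary in $V$.

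There is essentially no main obstacle beyond choosing the right witnessing formula; the argument is driven entirely by the asymmetry that cardinalities can be forced down but not back up, together with the assumption that $\Gamma$ contains arbitrarily aggressive collapses to $\omega_1$. The only point requiring mild care is checking the correct quantifier structure: ``$\Gamma$-forceable'' demands that \emph{every} condition of some $\P \in \Gamma$ force $\varphi(a)$, which is why the homogeneous behavior of the collapse (every condition collapses $\kappa$) is exactly what is needed.
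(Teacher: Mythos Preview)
Your argument is correct and is essentially the same as the paper's: both identify the statement ``$a \in H_{\omega_2}$'' (equivalently, $|\TC{a}| \le \aleph_1$) as a $\Gamma$-forceably $\Gamma$-necessary sentence with parameter $a$, witnessed by collapsing $|\TC{a}|$ (or something above it) to $\omega_1$, and then observe that membership in $H_{\omega_2}$ is upward-absolute to further forcing extensions. Your write-up simply makes explicit the chain of inequalities that the paper compresses into the single clause ``this must remain true in every further forcing extension.''
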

\begin{proof}
The point is that for any set $a$, it is $\Gamma$-forceably $\Gamma$-necessary that $a \in H_{\omega_2}$. Indeed, after forcing to collapse $|\TC{a}|$ to $\omega_1$, we have that $a \in H_{\omega_2}$ in the forcing extension. This must remain true in every further forcing extension. So, if $\MPsc(\{a\})$ holds, it follows that $a \in H_{\omega_2}$.
\end{proof}

Write $\bfMPsc$ for $\MPsc(H_{\omega_2})$, $\bfMPc$ for $\MPc(H_{\omega_2})$.

\begin{lemma} \label{lemma:MPcccparams} Let $\Gamma$ be a forcing class which may add an arbitrary amount of reals but cannot collapse cardinalities. Then $$\MP_\Gamma(S) \implies S \subseteq H_\c.$$\end{lemma}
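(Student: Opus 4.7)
The plan is to mirror the proof of Lemma~\ref{lemma:MPscparams}, substituting the role of ``collapsing to $\omega_1$'' with ``pumping up the continuum,'' which is exactly the feature that $\Gamma$ is assumed to provide. I want to show, for an arbitrary set $a$, that the sentence ``$a \in H_\c$'' is $\Gamma$-forceably $\Gamma$-necessary; the conclusion will then follow by an application of $\MP_\Gamma(\{a\})$.

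The core calculation is as follows. Fix $a$ and let $\kappa = |\TC\{a\}|$. Using the hypothesis that $\Gamma$ may add an arbitrary amount of reals, choose $\P \in \Gamma$ such that $\P$ forces $2^{\aleph_0} \geq \kappa$; the canonical example would be the forcing to add $\kappa$ many Cohen reals (assuming $\Gamma$ admits this, or the analogous $\Gamma$-forcing). Let $G \subseteq \P$ be generic over $V$. Since forcings in $\Gamma$ preserve cardinalities, $|\TC\{a\}|^{V[G]} = \kappa$, while $\c^{V[G]} \geq \kappa$, so $a \in H_\c$ holds in $V[G]$.

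Next I need to verify that this is preserved in every further $\Gamma$-extension. If $\Q \in \Gamma$ in $V[G]$ and $H \subseteq \Q$ is generic over $V[G]$, then by the no-collapse assumption applied in $V[G]$, $|\TC\{a\}|^{V[G][H]} = |\TC\{a\}|^{V[G]} = \kappa$. Moreover $\c^{V[G][H]} \geq \c^{V[G]} \geq \kappa$, because further forcing cannot destroy reals. Hence $a \in H_\c$ continues to hold in $V[G][H]$, showing that $a \in H_\c$ is $\Gamma$-necessary in $V[G]$, and therefore $\Gamma$-forceably $\Gamma$-necessary over $V$. Applying $\MP_\Gamma(\{a\})$ yields $a \in H_\c$ in $V$, as required.

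The only genuinely delicate point, and really the only place the hypotheses get used in an essential way, is the joint invocation of ``add arbitrarily many reals'' and ``no cardinal collapse'': the first guarantees that we can push $\c^{V[G]}$ above $\kappa$ in a single $\Gamma$-step, and the second guarantees that $|\TC\{a\}|$ does not shrink in further extensions so that the bound $a \in H_\c$ remains stable. Everything else is bookkeeping analogous to Lemma~\ref{lemma:MPscparams}.
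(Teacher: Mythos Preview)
Your approach is essentially identical to the paper's one-line proof (``$2^\omega$ is greater than the hereditary size of $a$'' is $\Gamma$-forceably $\Gamma$-necessary); you have simply unpacked it in detail, which is fine.

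There is one small slip worth fixing: you force $\c^{V[G]} \geq \kappa$ and then conclude $a \in H_\c^{V[G]}$, but $a \in H_\c$ means $|\TC{a}| < \c$, a strict inequality. If $\c^{V[G]} = \kappa$ exactly, then $a \notin H_\c^{V[G]}$. You should instead choose $\P \in \Gamma$ forcing $\c > \kappa$ (equivalently $\c \geq \kappa^+$), which the hypothesis ``may add an arbitrary amount of reals'' certainly allows. With this correction the rest of your argument goes through unchanged: in any further $\Gamma$-extension $V[G][H]$, cardinal preservation gives $|\TC{a}|^{V[G][H]} = \kappa$ and $\c^{V[G][H]} \geq \c^{V[G]} > \kappa$, so $a \in H_\c$ persists.
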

\begin{proof}
The point is that ``$2^\omega$ is greater than the hereditary size of $a$" is $\Gamma$-forceably $\Gamma$-necessary.
\end{proof}

Thus write $\bfMPccc$ for $\MPccc(H_\c)$. Note that by Lemma \ref{lemma:MPscparams}, $\MPp(H_\c) \implies H_{\c} \subseteq H_{\omega_2}$. In this paper, we choose to have the boldface version of the maximality principle for proper forcing to be $\bfMPp = \MPp(H_{\omega_2})$.

Assuming there is a regular cardinal $\delta$ satisfying $V_\delta \prec V$, the maximality principle is consistent. The proof of this uses a technique that adapts arguments of Hamkins \cite{Hamkins:2003jk}. Hamkins has described the proof as ``running through the house and turning on all the lights", in the sense that the posets that are forced are those that push ``buttons", sentences that can be ``switched on" and stay on, in all forcing extensions. A \emph{button} in our case is a sentence $\varphi(\vec a)$ that is $\Gamma$-forceably $\Gamma$-necessary.
As Hamkins discusses in detail, the existence of a regular cardinal $\delta$ such that $V_\delta \prec V$ is a scheme of formulas sometimes referred to as the ``L\'evy scheme." We refer to the L\'evy scheme as positing the existence of what we refer to as a \emph{fully reflecting} cardinal.

\begin{theorem}[{\cite[Thm.~31]{Hamkins:2003jk}}] \label{thm:fullyreflecting=MP}
The following consistency results hold.
\begin{enumerate}
	\item $\bfMP \implies \aleph_1^V$ is fully reflecting in $L$.
	\item $\bfMPccc \implies \c^V$ is fully reflecting in $L$.
	\item $\bfMPp \implies \aleph_2^V$ is fully reflecting in $L$.
	\item $\bfMPc \implies \aleph_2^V$ is fully reflecting in $L$.
	\item {\cite[Lemma 4.1.4]{Minden:2017fr}} $\bfMPsc \implies \aleph_2^V$ is fully reflecting in $L$.
\end{enumerate}
\end{theorem}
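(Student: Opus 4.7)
The plan for all five clauses is the same: letting $\kappa$ denote the cardinal asserted to be fully reflecting in $L$ (so $\kappa=\aleph_1$ for (1), $\kappa=\c$ for (2), and $\kappa=\aleph_2$ for (3)--(5)), I would prove by induction on $n<\omega$ that $L_{\kappa^V}\prec_{\Sigma_n}L$, whence $L_{\kappa^V}\prec L$. The base case $n=0$ is $\Delta_0$-absoluteness. The inductive step is a standard Tarski--Vaught argument: given $\vec x\in L_{\kappa^V}$ and a $\Sigma_{n+1}$-sentence $\exists y\,\psi(y,\vec x)$ true in $L$, one locates an ordinal $\alpha<\kappa^V$ with $\vec x\in L_\alpha$ and $L_\alpha\prec_{\Sigma_{n+1}}L$; this supplies a witness $y\in L_\alpha\subseteq L_{\kappa^V}$ with $L_\alpha\models\psi(y,\vec x)$, then $L\models\psi(y,\vec x)$ (using $L_\alpha\prec_{\Pi_n}L$, which is part of $\prec_{\Sigma_{n+1}}$), and finally $L_{\kappa^V}\models\psi(y,\vec x)$ by the inductive hypothesis applied to the $\Pi_n$-formula $\psi$.

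The essential use of the relevant maximality principle is to produce the ordinal $\alpha<\kappa^V$ above. For each fixed $n$, consider the sentence $\theta_n(\vec x)$ asserting ``there is $\alpha<\kappa$ with $\vec x\in L_\alpha$ and $L_\alpha\prec_{\Sigma_n}L$'', formulated using the usual partial satisfaction predicate for $\Sigma_n$-formulas. I would argue that $\theta_n(\vec x)$ is $\Gamma$-forceably $\Gamma$-necessary, where $\Gamma$ is the class appropriate to the case. Fix a $\Sigma_n$-correct $L$-cardinal $\lambda$ above the rank of $\vec x$; such $\lambda$ exist cofinally in $\Ord$ by reflection inside $L$. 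Now force with a member of $\Gamma$ that drives $\kappa$ past $\lambda$: namely $\Coll(\omega,\lambda)$ for (1), $\Add(\omega,\lambda')$ for some $\lambda'>\lambda$ to push $\c$ strictly above $\lambda$ in (2), and $\Coll(\omega_1,\lambda)$ for (3)--(5). Since $L$ is absolute, $L_\lambda\prec_{\Sigma_n}L$ survives, and in the extension $\lambda<\kappa$ witnesses $\theta_n(\vec x)$. Moreover, $\theta_n(\vec x)$ is $\Gamma$-necessary in this extension because the witness $\lambda$ stays below $\kappa$ in all further $\Gamma$-forcing extensions (the relevant cardinal only grows). A routine check confirms $L_{\kappa^V}$ is contained in the permitted parameter set ($H_{\omega_1}$, $H_\c$, or $H_{\omega_2}$ respectively): any $x\in L_{\kappa^V}$ has hereditary $L$-cardinality strictly below $\kappa^V$ and hence hereditary $V$-cardinality below $\kappa^V$.

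The main obstacle I anticipate is purely class-specific: verifying that the chosen collapse or Cohen forcing belongs to $\Gamma$. For (1) and (2) this is routine. For (3) and (4), $\Coll(\omega_1,\lambda)$ is countably closed, hence also proper. For (5) the non-trivial ingredient is that $\Coll(\omega_1,\lambda)$ is subcomplete; this is one of the foundational examples in the theory of subcomplete forcing and is the single point where the subcomplete case genuinely requires more than the countably closed or proper cases, though it should then fit into the template above with no further adjustments.
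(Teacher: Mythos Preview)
The paper does not supply its own proof of this theorem; it is quoted from Hamkins (and from the author's thesis for item (5)), so there is no in-paper argument to compare against directly. Your proposal is correct and is essentially the standard argument.

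A few minor remarks. In the inductive step from $n$ to $n+1$ you need $\theta_{n+1}$ rather than $\theta_n$ as literally written, but that is clearly what you intend. For (5), the fact you single out---that $\Coll(\omega_1,\lambda)$ is subcomplete---is an instance of Jensen's theorem that every countably closed forcing is subcomplete, so no bespoke argument is required beyond that citation; the subcomplete case then fits the template exactly as you say. Your check that the witness $\lambda$ stays below $\kappa$ in all further $\Gamma$-extensions is correct in each case: in (1) $\lambda$ has been made countable; in (3)--(5) $\lambda$ has been made of size $\aleph_1$ and $\omega_1$ is preserved; in (2) $ccc$ forcing preserves cardinals and never removes reals, so $\c$ cannot decrease.

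If you want a point of comparison within the paper itself, the proof of Theorem~\ref{theorem:bflMPinL} carries out a closely related argument (for the locally uplifting property rather than full reflection) and proceeds formula-by-formula, arguing directly that ``$H_{\omega_1}\models\varphi^L(\vec a)$'' is necessarily forceable, rather than climbing the $\Sigma_n$ hierarchy as you do. Your level-by-level approach is slightly more structured but amounts to the same idea.
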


It follows that for all of these classes $\Gamma$, $\bfMP_\Gamma$ cannot hold in $L$, since otherwise $L$ would think that $\aleph_2$ or $\c$ is inaccessible in $L$, a contradiction. Of course, $\bfMP_\Gamma$ fails in $L$ whenever $\Gamma$ has any nontrivial forcing in it, since then $``V\neq L"$ is $\Gamma$-forceably $\Gamma$-necessary.

\begin{theorem}[{\cite[Thm.~32]{Hamkins:2003jk}}] \label{thm:MP=fullyreflecting} Let $\delta$ be a fully reflecting cardinal. Then there are forcing extensions in which the following hold:
\begin{enumerate}
	\item $\bfMP$ and $\delta=\c=\aleph_1$.
	\item $\bfMPccc$ and $\delta=\c$.
	\item $\bfMPp$ and $\delta=\c=\aleph_2$.
	\item $\bfMPc$ and $\delta=\aleph_2$ and $\CH$.
	\item {\cite[Thm.~4.1.3]{Minden:2017fr}} $\bfMPsc$ and $\delta = \aleph_2$ and $\CH$.
\end{enumerate}	
\end{theorem}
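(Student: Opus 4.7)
The plan is to follow Hamkins' original button-pushing construction from \cite[Thm.~32]{Hamkins:2003jk} (and its adaptation in \cite[Thm.~4.1.3]{Minden:2017fr} for part (5)), adapting it uniformly to each of the five forcing classes $\Gamma$. Given a fully reflecting $\delta$, I would build a $\delta$-length iteration $\P_\delta = \langle \P_\alpha, \dot\Q_\alpha \st \alpha < \delta\rangle$ in $\Gamma$, with support appropriate to $\Gamma$: finite support for $ccc$, countable support for proper, countably closed, and subcomplete, and a collapse-based Easton-style setup for the full class. At each stage $\alpha$, a bookkeeping function supplies a pair $(\varphi_\alpha, \dot a_\alpha)$ in such a way that every potential button is presented cofinally often, and $\dot\Q_\alpha$ is forced to be the lottery sum of all $\Q \in \Gamma \cap (V_\delta)^{V[G_\alpha]}$ which force ``$\varphi_\alpha(a_\alpha)$ is $\Gamma$-necessary'', with trivial forcing as the default if no such $\Q$ exists. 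Each of the five classes is closed under lottery sums and admits an iteration theorem at its chosen support, so $\P_\delta \in \Gamma$.

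To verify that the maximality principle for $\Gamma$ holds in $V[G_\delta]$, I would take an arbitrary sentence $\varphi(a)$ in the intended parameter set that is $\Gamma$-forceably $\Gamma$-necessary in $V[G_\delta]$. By the cardinal arithmetic below, $a$ admits a nice name appearing in $V[G_\alpha]$ for some $\alpha < \delta$. Using $V_\delta \prec V$, I would reflect a witnessing poset $\Q$ down into $(V_\delta)^{V[G_\alpha]}$, making it a genuine option in the lottery at every sufficiently large stage. Cofinality of the bookkeeping then produces some stage $\beta \geq \alpha$ at which $(\varphi,a)$ is selected and the lottery picks $\Q$, whence $\varphi(a)$ is $\Gamma$-necessary in $V[G_{\beta+1}]$ and thus true in $V[G_\delta]$, since the tail of the iteration from $\beta+1$ to $\delta$ lies in $\Gamma$. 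The cardinal requirements are arranged as follows: finite-support of $ccc$ forcings adding Cohen reals gives $\c = \delta$ for (2); countable-support proper iteration collapses $\delta$ to $\aleph_2 = \c$ for (3); the subcomplete setup similarly yields $\aleph_2 = \c = \delta$ for (5) under $\CH$; the countably closed iteration adds no reals, preserving $\CH$ with $\delta = \aleph_2$ for (4); and in (1) one inserts collapses to $\omega_1$ at each stage to force $\aleph_1 = \c = \delta$.

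The main obstacle is the reflection step: ensuring that a witness $\Q$ to $\Gamma$-forceable-$\Gamma$-necessity in $V[G_\delta]$ can be replaced by one inside $(V_\delta)^{V[G_\alpha]}$ for some $\alpha < \delta$. This requires that $V_\delta \prec V$ propagate up the iteration as $(V_\delta)^{V[G_\beta]} \prec V[G_\beta]$ for every $\beta \leq \delta$, which holds routinely once each stage forcing is arranged to lie in $(V_\delta)^{V[G_\alpha]}$, but demands care at limit stages and at $\beta = \delta$ itself. A secondary challenge, and the reason part (5) is isolated in \cite{Minden:2017fr}, is that the subcomplete case relies on Jensen's iteration theorem: one must verify both that a lottery sum of subcomplete posets remains subcomplete, and that the countable-support iteration of such lotteries stays subcomplete throughout. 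The countably closed case (4) similarly depends on the standard fact that countable-support iteration of countably closed forcings adds no reals, which is what preserves $\CH$ alongside making $\delta = \aleph_2$.
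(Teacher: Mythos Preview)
Your overall button-pushing strategy matches the paper's, but there is a genuine gap in your claim that ``each of the five classes is closed under lottery sums.'' This is false for $ccc$: a lottery sum of uncountably many posets has an uncountable antichain (one condition per summand), and the lottery you describe at stage $\alpha$ ranges over all $\Q \in \Gamma \cap (V_\delta)^{V[G_\alpha]}$ forcing the given button necessary, which is typically a proper class of $V_\delta^{V[G_\alpha]}$. The paper flags exactly this point after stating the theorem: ``$ccc$ forcing notions are not closed under lottery sums. So in the case of the $ccc$ forcing class, we use Hamkins' method more directly.'' For (2) you must drop the lottery and have the bookkeeping select a \emph{single} $ccc$ poset at each stage.

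Two further corrections. For subcomplete forcing in (5), plain countable support is not what is used; one needs revised countable support, relying on Jensen's iteration theorem that RCS iterations of subcomplete forcing remain subcomplete (the paper makes this explicit in the proof of the combined Theorem~\ref{thm:RA+MP}). And your cardinal arithmetic for (5) is inconsistent: you write ``$\aleph_2 = \c = \delta$ under $\CH$,'' but $\CH$ gives $\c = \aleph_1$. Subcomplete forcing adds no reals, so $\CH$ is preserved and one gets $\delta = \aleph_2$ with $\c = \aleph_1$, as the statement asserts. As a minor stylistic point, the paper favors a least-counterexample lottery with no bookkeeping at all (the lottery at each stage already ranges over minimal-rank posets pushing \emph{any} currently available button), whereas you layer bookkeeping on top of a lottery; either works for the non-$ccc$ classes, but the bookkeeping is redundant once a lottery is present.
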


Here a least-counterexample lottery sum iteration, which takes lottery sums at each stage, is favored. Hamkins' proofs make use of a book-keeping function. Overall, the two methods follow the same ``running through the house" method. We define the lottery sum poset below.

\begin{definition} For a family $\mathcal P$ of forcing notions, the \emph{lottery sum} poset is defined as follows: 
	$$\bigoplus \mathcal P=\{ \mathbbm 1_{\mathcal P} \} \cup \set{ \langle \P, p \rangle }{ \P \in \mathcal P \ \land \ p \in \P }$$
with $\mathbbm 1_{\mathcal P}$ weaker than everything and $\langle \P, p \rangle \leq \langle \P',p' \rangle$ if and only if $\P = \P'$ and $p \leq_\P p'$. \end{definition}
For two forcing notions, $\P$ and $\Q$, write $\P \oplus \Q$ for $\bigoplus\{\P, \Q\}$.

One major difference encountered using a lottery sum is that $ccc$ forcing notions are not closed under lottery sums. So in the case of the $ccc$ forcing class, we use Hamkins' method more directly.

%
%
%

\section{The Resurrection Axiom}
\label{sec:RA}
The idea behind the resurrection axiom is to look at the model-theoretic concept of existential closure in the realm of forcing, because, as is pointed out by Hamkins and Johnstone (\cite{Hamkins:2013qv}), the notions of resurrection and existential closure are tightly connected in model theory. A submodel $\mathcal M \subseteq \mathcal N$ is \textit{existentially closed} in $\mathcal N$ if existential statements in $\mathcal N$ using parameters from $\mathcal M$ are already true in $\mathcal M$, i.e., $\mathcal M$ is a $\Sigma_1$-elementary substructure of $\mathcal N$. Many forcing axioms can be expressed informally by stating that the universe is existentially closed in its forcing extensions, since forcing axioms posit that generic filters, which normally exist in a forcing extension, exist already in the ground model.  Hamkins and Johnstone consider resurrection for forcing extensions to be a more ``robust" formulation of forcing axioms for various forcing classes. Resurrection axioms imply the truth of their associated bounded forcing axiom, but not the other way around. 

\begin{definition} Let $\Gamma$ be a class of forcing notions closed under two-step iterations. Let $\tau$ be a term for a cardinal to be computed in various models; e.g. $\c$, $\aleph_1$, etc. The \emph{Resurrection Axiom} $\textsf{RA}_\Gamma(H_\tau)$ asserts that for every forcing notion $\Q \in \Gamma$ there is a further forcing $\dot{\R}$ with $\forces_{\Q} \dot{\R}  \in \Gamma$ such that if $g*h \subseteq \Q * \dot{\R}$ is $V$-generic, then $H_{\tau}^V \prec H_{\tau}^{V[g*h]}.$
\end{definition}

Hamkins and Johnstone  \cite{Hamkins:2013qv} examined $\textsf{RA}_\Gamma(H_\c)$ for $\Gamma$ such as  proper, $ccc$, countably closed, and the class of all forcing notions.
The reason $H_\c$ is required in general is that if some forcing notion in $\Gamma$ adds new reals, then $H_\kappa$, where $\kappa>\c$ in $V$, simply cannot be existentially closed in the forcing extension; the added real itself is witnessing the lack of existential closure. So certainly for $\kappa>\c$ and any class of forcing notions $\Gamma$ which potentially add new reals, $\textsf{RA}_\Gamma(H_\kappa)$ cannot hold. 
For proper forcing and $ccc$ forcing, we write $\RAp$ for $\RAp(H_\c)$ and $\RAccc$ for $\RAccc(H_\c)$.
However, by the following result we see that $\RAc(H_\c)$ and $\RAsc(H_\c)$ are both equivalent to $\CH$. 

\begin{proposition}{\cite[Thm.~8]{Hamkins:2013qv}} \label{proposition:CHiffRAscHc} Suppose $\Gamma$ contains a forcing which forces $\CH$ but no forcing in $\Gamma$ adds new reals. Then $\CH \iff \RA_\Gamma(H_\c)$. \end{proposition}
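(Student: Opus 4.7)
The plan is to prove both directions by tracking how $H_\c$ behaves under the hypothesis that no forcing in $\Gamma$ adds reals. The key observation is that $\CH$ is equivalent to $H_\c = \HC$, and $\HC$ is preserved by any forcing not adding new reals (each hereditarily countable set is coded by a real, and no new codes appear).

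For the forward direction, assume $\CH$, so $H_\c^V = \HC^V$. Given $\Q \in \Gamma$, I would take $\dot\R$ to be a $\Q$-name for a forcing in $\Gamma^{V[g]}$ that forces $\CH$ over $V[g]$; such a forcing exists because the hypothesis on $\Gamma$ is absolute, with the witness being a collapse of $\c^{V[g]}$ to $\omega_1$ available inside $V[g]$. Then $\CH$ holds in $V[g*h]$, so $H_\c^{V[g*h]} = \HC^{V[g*h]}$. Since no reals are added along the iteration (using closure of $\Gamma$ under two-step iterations), $\HC^{V[g*h]} = \HC^V = H_\c^V$, and the elementarity $H_\c^V \prec H_\c^{V[g*h]}$ is trivial because the two structures coincide.

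For the backward direction, I would pick a $\Q \in \Gamma$ forcing $\CH$ (given by the hypothesis) and apply $\RA_\Gamma(H_\c)$ to obtain $\dot\R$ with $\forces_\Q \dot\R \in \Gamma$ and $H_\c^V \prec H_\c^{V[g*h]}$. In $V[g*h]$ the statement $\CH$ still holds, because $\dot\R$ adds no reals and so $2^\omega$ is unchanged from $V[g]$, while $\omega_1^{V[g]}$ cannot be collapsed without adding a real coding the collapsing function. Therefore $H_\c^{V[g*h]} = \HC^{V[g*h]} = \HC^V \subseteq H_\c^V$, and combining this with the inclusion $H_\c^V \subseteq H_\c^{V[g*h]}$ from elementarity yields $H_\c^V = \HC^V$. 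This forces $\c^V = \aleph_1$, since otherwise $\omega_1 \in H_\c^V \setminus \HC^V$, a contradiction.

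The only real delicacy is keeping careful track of how $H_\c$ is interpreted in the three relevant models ($V$, $V[g]$, $V[g*h]$), since $\c$ is a term whose value may change; but the hypothesis that no forcing in $\Gamma$ adds reals, together with the usual observation that such forcings preserve $\omega_1$, ensures that the interpretations align as needed, so no further obstacle arises.
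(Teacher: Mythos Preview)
Your argument is essentially correct, but the forward direction takes an unnecessary detour whose justification is shaky. You propose choosing $\dot\R$ to be a forcing in $\Gamma^{V[g]}$ that forces $\CH$, and you justify its existence by appealing to ``absoluteness of the hypothesis'' and the availability of a collapse of $\c^{V[g]}$ to $\omega_1$. But nothing in the hypothesis says $\Gamma$ contains any collapse forcing; it only says $\Gamma$ contains \emph{some} poset forcing $\CH$. The good news is that this detour is not needed: since $\CH$ holds in $V$ and $\Q\in\Gamma$ adds no reals (hence preserves $\omega_1$), $\CH$ already holds in $V[g]$, and therefore in $V[g*h]$ for \emph{any} $\dot\R\in\Gamma^{V[g]}$. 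So you may simply take $\dot\R$ trivial, and your chain $H_\c^V=\HC^V=\HC^{V[g*h]}=H_\c^{V[g*h]}$ goes through. This is exactly what the paper does: it observes that $H_{\omega_1}$ is literally unchanged and $\c$ stays equal to $\omega_1$, so resurrection is witnessed trivially.

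For the backward direction your approach is correct and differs slightly from the paper's. You argue set-theoretically: from $H_\c^V\prec H_\c^{V[g*h]}$ and $H_\c^{V[g*h]}=\HC^{V[g*h]}=\HC^V\subseteq H_\c^V$ you conclude $H_\c^V=\HC^V$, hence $\CH$. The paper instead notes that $\CH$ is expressible inside $H_\c$ as the first-order statement ``there is only one infinite cardinal,'' so elementarity transfers $\CH$ from $H_\c^{V[g*h]}$ down to $H_\c^V$ directly. Both routes are fine; the paper's is a bit slicker, while yours makes the structural equality explicit.
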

\begin{proof}
For the forward implication, suppose that $\CH$ holds. Then since no new reals are added, $H_{\omega_1}$ is unaffected by each forcing in $\Gamma$, and moreover, $\c$ remains $\omega_1$ in every extension by a forcing in $\Gamma$. 

For the backward direction, assume $\RA_\Gamma(H_\c)$ holds. Let $\P$ force $\CH$. Then there is a further forcing $\dot \R$ satisfying $\forces_\P \dot \R \in \dot \Gamma$ such that letting $g *h \subseteq \P * \dot \R$ be generic, we have $H_\c \prec H_\c^{V[g*h]}.$ We know that $\CH$ has to hold still in $V[g*h]$ since no new reals are added to make $\c$ larger. Thus $\CH$ holds in $V$ by elementarity, as desired.
Indeed, $\CH$ is equivalent to the statement that $H_{\c}$ contains only one infinite cardinal, which can be expressed in $H_{\c}$. 
\end{proof}
Perhaps $\RAsc(H_\c)$, or indeed $\RAc(H_\c)$, is not necessarily the right axiom to look at. So what is the correct axiom to examine? I will discuss two reasonable possibilities for the hereditary sets: $H_{\omega_2}$ and $H_{2^{\omega_1}}$. Let's see what $\RAsc(H_{2^{\omega_1}})$ and $\RAc(H_{2^{\omega_1}})$ imply about the size of $2^{\omega_1}$.

\begin{proposition} Suppose $\Gamma$ contains forcing to collapse to $\omega_1$. Then $\RA_\Gamma(H_{2^{\omega_1}}) \implies 2^{\omega_1} = \omega_2$. 
\end{proposition}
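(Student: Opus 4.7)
The plan is to argue by contradiction: assume $(2^{\omega_1})^V > \omega_2^V$ and derive a violation of the elementarity supplied by $\RA_\Gamma(H_{2^{\omega_1}})$. Using the hypothesis on $\Gamma$, pick a forcing $\Q \in \Gamma$ that collapses $\omega_2^V$ to have cardinality $\omega_1$ (for instance $\Coll(\omega_1, \omega_2^V)$, when this lies in $\Gamma$), apply the resurrection axiom to obtain a $\Q$-name $\dot\R$ with $\Q \forces \dot\R \in \Gamma$, and let $g * h \subseteq \Q * \dot\R$ be $V$-generic with
$$H^V_{(2^{\omega_1})^V} \prec H^{V[g*h]}_{(2^{\omega_1})^{V[g*h]}}.$$

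The sentence that should yield the contradiction is the one expressing ``$\omega_2^V$ is not a cardinal," with $\omega_2^V$ itself as parameter. Under the contradictory assumption $\omega_2^V < (2^{\omega_1})^V$, the parameter $\omega_2^V$ lies in $H^V_{(2^{\omega_1})^V}$; and since forcing can only shrink the cardinality of an ordinal, the chain $|\omega_2^V|^{V[g*h]} \le \omega_1^{V[g*h]} < \omega_2^{V[g*h]} \le (2^{\omega_1})^{V[g*h]}$ shows that $\omega_2^V$ is a legal parameter on the right as well. In $V$ the ordinal $\omega_2^V$ is a cardinal, so the sentence fails in $H^V_{(2^{\omega_1})^V}$. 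In $V[g*h]$, however, there is a bijection $f : \alpha \to \omega_2^V$ for some $\alpha < \omega_2^V$ (coming from the collapse by $\Q$), and $|\TC{f}|^{V[g*h]} \le \omega_1^{V[g*h]} < (2^{\omega_1})^{V[g*h]}$, so $f \in H^{V[g*h]}_{(2^{\omega_1})^{V[g*h]}}$ witnesses that $\omega_2^V$ fails to be a cardinal there. Elementarity then forces the sentence to hold in $H^V_{(2^{\omega_1})^V}$ as well, contradicting that $\omega_2^V$ really is a cardinal in $V$.

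Therefore $2^{\omega_1} \le \omega_2$, and Cantor's theorem supplies the reverse inequality, giving $2^{\omega_1} = \omega_2$. The only delicate bookkeeping step is the verification that the witness $f$ and the parameter $\omega_2^V$ both sit inside $H^{V[g*h]}_{(2^{\omega_1})^{V[g*h]}}$ even if $\dot\R$ were to collapse $\omega_1$ further; the cardinality chain above takes care of this, and for each of the classes considered in the paper $\omega_1$ is preserved anyway, so the point is moot in practice.
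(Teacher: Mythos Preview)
Your proof is correct and follows essentially the same strategy as the paper: assume $2^{\omega_1} > \omega_2$, take $\kappa = \omega_2^V$ as a parameter in $H_{2^{\omega_1}}$, collapse $\kappa$ to $\omega_1$ with some $\Q \in \Gamma$, and observe that a statement about $\kappa$ changes truth value and stays changed in every further extension, contradicting resurrection. The only cosmetic difference is the choice of sentence: the paper uses ``$\kappa = \omega_2$'' (true in $H_{2^{\omega_1}}^V$, false in $H_{2^{\omega_1}}^{V[g][h]}$ for any further $h$), whereas you use ``$\kappa$ is not a cardinal''; both are absolute downward once the collapse has occurred, so either works.
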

\begin{proof}
We show the contrapositive. Let $2^{\omega_1} \geq \omega_3$. Let $\kappa = \omega_2^V$. Then $H_{2^{\omega_1}} \models ``\kappa = \omega_2"$. But after forcing to collapse $\kappa$ to $\omega_1$ we have that $\kappa < \omega_2$ in the extension. Moreover, if $\R$ is any further  forcing in $\Gamma$, we will still have that for $h \subseteq \R$ generic, $H_{2^{\omega_1}}^{V[g][h]} \models ``\kappa < \omega_2"$. So $\RA_\Gamma(H_{2^{\omega_1}})$ must fail.
\end{proof}

The next proposition gives a relationship between $\RAc(H_{2^{\omega_1}})$ and $\RAc(H_{\omega_2})$ as well as between $\RAsc(H_{2^{\omega_1}})$ and $\RAsc(H_{\omega_2})$. However, the answer to the following question is unknown.

\begin{proposition} Let $\Gamma$ be a forcing class containing forcing notions which collapse arbitrarily large cardinals to $\omega_2$ and to $\omega_1$.  Then $$\RA_\Gamma(H_{2^{\omega_1}}) \iff 2^{\omega_1}=\omega_2 + \RA_\Gamma(H_{\omega_2}).$$
\end{proposition}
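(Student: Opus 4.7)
The plan is to establish both implications. For the forward direction, assume $\RA_\Gamma(H_{2^{\omega_1}})$. The preceding proposition immediately gives $2^{\omega_1} = \omega_2$ in $V$, so $H_{2^{\omega_1}}^V = H_{\omega_2}^V$. Given $\Q \in \Gamma$, apply the hypothesis to produce $\dot\R$ with $H_{2^{\omega_1}}^V \prec H_{2^{\omega_1}}^{V[g*h]}$. To extract $\RA_\Gamma(H_{\omega_2})$, I need to verify $H_{2^{\omega_1}}^{V[g*h]} = H_{\omega_2}^{V[g*h]}$, that is, that $2^{\omega_1} = \omega_2$ persists in $V[g*h]$. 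The key observation is that $H_{\omega_2}^V$ satisfies, with $\omega_1$ as a parameter, the first-order statement ``every ordinal injects into $\omega_1$''. By elementarity this transfers: every ordinal in $H_{2^{\omega_1}}^{V[g*h]}$ has $V[g*h]$-cardinality at most $\omega_1^V$. Applying the elementarity to the definition of $\omega_1$ as the least uncountable ordinal gives $\omega_1^V = \omega_1^{V[g*h]}$, so every ordinal below $(2^{\omega_1})^{V[g*h]}$ has $V[g*h]$-cardinality at most $\omega_1^{V[g*h]}$. Were $\omega_2^{V[g*h]} < (2^{\omega_1})^{V[g*h]}$, the ordinal $\omega_2^{V[g*h]}$ would lie in $H_{2^{\omega_1}}^{V[g*h]}$ and admit cardinality at most $\omega_1$, which is absurd. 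Combined with Cantor's inequality in the reverse direction, $(2^{\omega_1})^{V[g*h]} = \omega_2^{V[g*h]}$ follows.

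For the backward direction, assume $2^{\omega_1} = \omega_2$ and $\RA_\Gamma(H_{\omega_2})$, and fix $\Q \in \Gamma$. First apply $\RA_\Gamma(H_{\omega_2})$ to obtain $\dot\R_0$ with $\forces_\Q \dot\R_0 \in \Gamma$ and $H_{\omega_2}^V \prec H_{\omega_2}^{V[g*h_0]}$. In $V[g*h_0]$, let $\lambda = 2^{\omega_1}$ and further force with $\Coll(\omega_2, \lambda)$; this second step lies in $\Gamma$ by the collapsing hypothesis, and closure under two-step iterations puts $\dot\R_0 * \Coll(\omega_2, \lambda)$ in $\Gamma$. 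Since $\Coll(\omega_2, \lambda)$ is $<\!\omega_2$-closed, it introduces no new sets of hereditary cardinality below $\omega_2$, so $H_{\omega_2}^{V[g*h_0*h_1]} = H_{\omega_2}^{V[g*h_0]}$; in particular no new subsets of $\omega_1$ appear, so $|P(\omega_1)|^{V[g*h_0*h_1]} = \lambda$, which is collapsed to $\omega_2$, giving $2^{\omega_1} = \omega_2$ in $V[g*h_0*h_1]$. Concatenating, $H_{2^{\omega_1}}^V = H_{\omega_2}^V \prec H_{\omega_2}^{V[g*h_0*h_1]} = H_{2^{\omega_1}}^{V[g*h_0*h_1]}$, as required.

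The main obstacle is in the forward direction, where one must argue that the coincidence $H_{2^{\omega_1}} = H_{\omega_2}$ from the ground model persists in the forcing extension, even though the underlying equation ``$2^{\omega_1} = \omega_2$'' is not itself expressible inside $H_{2^{\omega_1}}$. Reformulating in terms of cardinalities of ordinals, with $\omega_1$ as a parameter, is what lets the resurrection's elementarity carry the relevant information; the backward direction is then more routine, handled by appending an $\omega_2$-closed collapse that does not disturb $H_{\omega_2}$.
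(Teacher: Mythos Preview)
Your proof is correct and follows essentially the same approach as the paper's: in the forward direction you spell out in detail the elementarity argument that the paper compresses into the phrase ``$H_{\omega_2}=H_{2^{\omega_1}}$ in the extension by elementarity'', and in the backward direction you always append the $\omega_2$-collapse rather than splitting into cases, but the underlying idea is identical.
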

\begin{proof}
For the forward direction, we already have that $\RA_\Gamma(H_{2^{\omega_1}}) \implies 2^{\omega_1}=\omega_2$ by the previous proposition. Moreover, if $\RA_\Gamma(H_{2^{\omega_1}})$ holds, so does $\RA_\Gamma(H_{\omega_2})$, since $H_{\omega_2}=H_{2^{\omega_1}}$ in the extension by elementarity.

For the backward direction, suppose that $\RA_\Gamma(H_{\omega_2})$ holds and $2^{\omega_1}=\omega_2$. We would like to show that $\RA_\Gamma(H_{2^{\omega_1}})$ holds.
Toward that end, suppose that $\Q$ is in $\Gamma$ and let $g \subseteq \Q$ be generic. Then we have that there is some forcing $\R$ with $h \subseteq \R$ generic over $V[g]$, such that 
	$H_{2^{\omega_1}}^V=H_{\omega_2}^V \prec H_{\omega_2}^{V[g*h]}.$
So if in $V[g*h]$ we have that $2^{\omega_1}=\omega_2$, then we are done. If not, let $G$ collapse $2^{\omega_1}$ to be $\omega_2$ over $V[g][h]$. Then $H_{\omega_2}^{V[g*h]}=H_{\omega_2}^{V[g*h*G]}=H_{2^{\omega_1}}^{V[g*h*G]}$, so we are done.
\end{proof}

\begin{question} Is it the case that $\RAsc(H_{2^{\omega_1}}) \iff \RAsc(H_{\omega_2})$? 
Indeed, is it the case that $\RAc(H_{2^{\omega_1}}) \iff \RAc(H_{\omega_2})$?\end{question}

In comparison to Proposition \ref{proposition:CHiffRAscHc}, the lack of any obvious restraints for the size of $2^{\omega_1}$ lends credibility to $H_{\omega_2}$ being the right parameter set to consider for forcing notions which do not add reals, but do contain the relevant collapses. So this is what we will be using. Write $\RAsc$ for $\RAsc(H_{\omega_2})$ and $\RAc$ for $\RAc(H_{\omega_2})$. 

Hamkins and Johnstone \cite{Hamkins:2013qv} determined that the consistency strength of resurrection is an uplifting cardinal. 

\begin{definition}
We say that $\kappa$ is \emph{uplifting} so long as $\kappa$ is $\theta$-uplifting for every ordinal $\theta$. This means that there is an inaccessible cardinal $\gamma \geq \theta$ such that $V_\kappa \prec V_\gamma$.
\end{definition}

Since we are focusing our attention to the boldface maximality principle in this paper, we should also look at the boldface version of resurrection, in which we carry around a kind of parameter set.

\begin{definition} Let $\Gamma$ be a fixed, definable class of forcing notions. Let $\tau$ be a term for a cardinality to be computed in various models; e.g. $\c$, $\omega_1$, etc. The \emph{Boldface Resurrection Axiom} $\textsf{\textbf{RA}}_\Gamma(H_\tau)$ asserts that for every forcing notion $\Q \in \Gamma$ and $A \subseteq H_\tau$ there is a further forcing $\dot{\R}$ with $\forces_{\Q} \dot{\R}  \in \Gamma$ such that if $g*h \subseteq \Q * \dot{\R}$ is $V$-generic, then there is an $A^* \in V[g*h]$ such that $\langle H_{\tau}^V, \in, A \rangle \prec \langle H_{\tau}^{V[g*h]}, \in, A^* \rangle. $ \end{definition}

Again, it doesn't make too much sense to talk about the resurrection axiom at the continuum for forcing classes which can't add new reals.
Thus as in the lightface versions, the notion we will be looking at for the boldface version is $\bfRAsc(H_{\omega_2})$ which we will just refer to as $\bfRAsc$, $\bfRAp$ for $\bfRAp(H_\c)$, $\bfRAccc$ for $\bfRAccc(H_\c)$, and $\bfRA$ for $\bfRA(H_\c)$. Hamkins and Johnstone \cite{Hamkins:2014yu} determined that the consistency strength of boldface resurrection is a strongly uplifting cardinal.

\begin{definition} We say that $\kappa$ is \emph{strongly uplifting} so long as $\kappa$ is $\theta$-strongly uplifting for every ordinal $\theta$. This means that for every $A \subseteq V_\kappa$ there is an inaccessible cardinal $\gamma \geq \theta$ and a set $A^* \subseteq V_\gamma$ such that $\langle V_\kappa , \in, A \rangle \prec \langle V_\gamma, \in, A^* \rangle$ is a proper elementary extension.\footnote{As described by Hamkins and Johnstone in the comments on page 5 of their paper, we may let $\gamma$ be regular, uplifting, weakly compact, etc.} \end{definition}

If $\kappa$ is strongly uplifting then $\kappa$ is inaccessible.

%
%
%

\begin{theorem} [{\cite[Thm.~19]{Hamkins:2014yu}}]
Let $\kappa$ be strongly uplifting. Then there are forcing extensions in which the following hold:
\begin{enumerate}
	\item $\bfRA$ and $\kappa=\c=\aleph_1$.	
	\item $\bfRAccc$ and $\kappa=\c$.
	\item $\bfRAp$ and $\kappa = \c = \aleph_2$.
	\item $\bfRAc$ and $\kappa=\aleph_2$ and $\CH$.
	\item {\cite[Thm.~4.2.12]{Minden:2017fr}} $\bfRAsc$ and $\kappa=\aleph_2$ and $\CH$.
\end{enumerate}
\end{theorem}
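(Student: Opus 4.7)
The plan is to carry out, for each forcing class $\Gamma$, a lottery-sum iteration of length $\kappa$ with the support appropriate to $\Gamma$, and then to use strong upliftingness to lift an elementary embedding through the forcing so as to witness the resurrection axiom. This parallels the Hamkins--Johnstone template for $\bfRA_\Gamma$, with case (5) established by replacing Shelah's properness iteration theorem with Jensen's iteration theorem for subcomplete forcing. I would define $\P_\kappa = \langle (\P_\alpha, \dot\Q_\alpha) \st \alpha < \kappa \rangle$ using Easton support for (1), finite support for (2), countable support for (3) and (4), and revised countable support (RCS) for (5). In cases (1), (3), (4), and (5), each $\dot\Q_\alpha$ is the name for the lottery sum of all $\Gamma$-forcings of hereditary size less than the least strongly inaccessible above $\alpha$; in case (2), since lottery sums of ccc forcings need not be ccc, one instead uses Hamkins' direct bookkeeping and lets $\dot\Q_\alpha$ be a single ccc $\P_\alpha$-name dictated by an anticipation function. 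A fast function associated to the strongly uplifting cardinal $\kappa$ ensures that every $\P_\kappa$-name $\dot\Q$ for a $\Gamma$-forcing together with every $\P_\kappa$-name $\dot A$ for a subset of $H_\tau$ is caught cofinally often below $\kappa$.

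Next I would verify, using the relevant iteration theorem (preservation of ccc under finite support, Shelah's iteration theorem for properness, trivial preservation for countable closure, and Jensen's RCS iteration theorem for subcompleteness), that $\P_\kappa$ belongs to $\Gamma$ and that $\kappa$ realizes the targeted cardinal arithmetic in $V[G_\kappa]$: namely $\kappa = \c = \aleph_1$ in (1), $\kappa = \c$ in (2), $\kappa = \c = \aleph_2$ in (3), and $\kappa = \aleph_2$ with $\CH$ in (4) and (5). Preservation of $\CH$ in (4) and (5) follows because countably closed and subcomplete forcings add no reals, while the cardinal identifications come from the fact that the lottery incorporates the relevant collapse forcings at each stage.

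To verify $\bfRA_\Gamma$ in $V[G_\kappa]$, fix $\Q \in \Gamma$ and $A \subseteq H_\tau^{V[G_\kappa]}$ with names $\dot\Q$ and $\dot A$, and code $(\P_\kappa, \dot\Q, \dot A)$ into a single $B \subseteq V_\kappa$. By strong upliftingness choose inaccessible $\gamma > \kappa$ and $B^* \subseteq V_\gamma$ with $\langle V_\kappa, \in, B\rangle \prec \langle V_\gamma, \in, B^*\rangle$. Elementarity yields an analogous iteration $\P_\gamma$ of length $\gamma$ in $V_\gamma$ which extends $\P_\kappa$ and whose stage-$\kappa$ lottery offers $\Q$ as one of its options. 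Force over $V_\gamma$ with $\P_\gamma$ extending $G_\kappa$ and choosing $\Q$ at stage $\kappa$, obtaining $G_\gamma = G_\kappa * g * h$ where $g$ is $\Q$-generic and $h$ is generic for the remaining tail $\dot\R \in \Gamma$. Standard lifting of the embedding through the forcing then provides $\langle H_\tau^{V[G_\kappa]}, \in, A\rangle \prec \langle H_\tau^{V[G_\kappa * g * h]}, \in, A^*\rangle$ for the $A^*$ coded by the image of $B$ in $V_\gamma[G_\gamma]$, completing the verification.

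The main obstacle is case (5): one needs to check that the RCS lottery iteration is subcomplete, which requires first showing that lottery sums of subcomplete forcings of bounded hereditary size remain subcomplete, and then invoking Jensen's iteration theorem for subcompleteness under RCS. A secondary difficulty in case (2) is running Hamkins' direct bookkeeping argument with a boldface parameter $A$, and in every case one must confirm that the quotient $\P_\gamma / G_\kappa$ remains in $\Gamma$ as evaluated in $V[G_\kappa]$, so that it can be factored as $\Q * \dot\R$ with $\dot\R \in \Gamma$ over $V[G_\kappa][g]$.
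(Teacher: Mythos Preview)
The paper does not give its own proof of this theorem: it is quoted verbatim from Hamkins--Johnstone (with item~(5) attributed to the author's thesis) and stated without argument, so there is no in-paper proof to compare against directly. Your outline is essentially the Hamkins--Johnstone template, and it also matches the machinery the paper later deploys when proving the \emph{combined} $\bfRA_\Gamma + \bfMP_\Gamma$ results in Section~\ref{sec:RA+MP}: lottery-sum iterations of length $\kappa$ with the class-appropriate support, the lifting lemma (Fact~\ref{fact:liftinglemma}) applied to a strongly uplifting extension $\langle V_\kappa,\in,B\rangle \prec \langle V_\gamma,\in,B^*\rangle$, and, for the $ccc$ case, a bookkeeping argument in place of the lottery.

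Two small corrections. First, for case~(1) the paper and Hamkins--Johnstone both use \emph{finite} support, not Easton support; finite support is what makes the $\kappa$-cc and the identity $V_\kappa[G]=H_\c^{V[G]}$ go through cleanly. Second, the anticipation device in case~(2) is a \emph{Laver function} for the strongly uplifting cardinal (the paper recalls the definition and existence just before Theorem~\ref{thm:cccRA+MP}), not a fast function; and in the lottery cases (1), (3)--(5) no anticipation function is needed at all, since the lottery already offers every small $\Gamma$-poset at every stage, so the chosen $\Q$ is automatically available at stage $\kappa$ of the extended iteration $\P^*$. With those adjustments your sketch is correct and in line with the cited sources.
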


\begin{theorem}[{\cite[Thm.~19]{Hamkins:2014yu}}] \label{thm:stronglyuplifting=RA}
We have the following implications:
	\begin{enumerate}
		\item $\bfRA$ $\implies$ $\aleph_1^V$ is strongly uplifting in $L$.
		\item $\bfRAccc$ $\implies$ $\c^V$ is strongly uplifting in $L$.
		\item $\bfRAp$ $\implies$ $\c^V=\aleph_2^V$ is strongly uplifting in $L$.
		\item $\bfRAc$ $\implies$ $\aleph_2^V$ is strongly uplifting in $L$.
		\item {\cite[Thm~4.2.13]{Minden:2017fr}} $\bfRAsc$ $\implies$ $\aleph_2^V$ is strongly uplifting in $L$.
	\end{enumerate}
\end{theorem}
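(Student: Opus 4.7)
My plan is to prove all five implications uniformly and to sketch only case~(5), $\bfRAsc\Rightarrow\aleph_2^V$ is strongly uplifting in $L$; the remaining cases follow the same template, with the class-appropriate ``collapse'' (Cohen forcing or $\Coll(\omega,\cdot)$ for~(1)--(2), $\Coll(\omega_2,\cdot)$ or a proper analogue for~(3)--(4)) and the corresponding $H_\tau$. Set $\kappa=\aleph_2^V$ and fix $A\in L$ with $A\subseteq L_\kappa$ and an ordinal $\theta$. Each element of $L_\kappa$ has $V$-hereditary cardinality $<\aleph_2^V$, so $L_\kappa\subseteq H_{\omega_2}^V$ and $A$ is a legitimate parameter for $\bfRAsc$. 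First I would force with $\Q=\Coll(\omega_1,\max(\theta,\kappa))$ (subcomplete), and then apply $\bfRAsc$ with parameter $A$ to obtain a subcomplete $\dot{\R}$ and some $A^*\in V[g*h]$ with
\[
\langle H_{\omega_2}^V,\in,A\rangle\prec\langle H_{\omega_2}^{V[g*h]},\in,A^*\rangle.
\]
Setting $\gamma=\aleph_2^{V[g*h]}$ and $A^{**}=A^*\cap L_\gamma$, the collapse ensures $\gamma>\max(\theta,\kappa)$, so $\gamma\geq\theta$ and the extension will be proper.

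Next I would descend from $H_{\omega_2}$-level elementarity to the $L$-level. The construction of $L$ is uniformly definable inside any sufficiently tall $H_\tau$, and $L$ is absolute, so $L_\alpha^V=L_\alpha^{V[g*h]}$; hence any first-order statement over $L_\kappa$ with parameter $A$ translates to a statement inside $\langle H_{\omega_2}^V,\in,A\rangle$, propagates by the elementarity above to $\langle H_{\omega_2}^{V[g*h]},\in,A^*\rangle$, and translates back to $L_\gamma$ with $A^{**}$. This will yield the required proper elementary extension $\langle L_\kappa,\in,A\rangle\prec\langle L_\gamma,\in,A^{**}\rangle$.

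The main obstacle will be verifying that $\gamma$ is inaccessible in $L$. Regularity in $L$ is automatic, since $\gamma$ is regular in $V[g*h]$. For the limit-cardinal half I would first show that $\kappa$ itself is a limit cardinal in $L$ by contradiction: if $\kappa=(\lambda^+)^L$ for some $L$-cardinal $\lambda<\kappa$, I would apply $\bfRAsc$ with parameter $\lambda$ after a collapse of $\kappa$ to $\omega_1$; the sentence ``$\lambda$ is the largest $L$-cardinal below $\Ord$'' is first-order over $H_{\omega_2}$, so it would transfer by elementarity to $H_{\omega_2}^{V[g*h]}$, yet in $V[g*h]$ the original $\kappa$ persists as an $L$-cardinal strictly between $\lambda$ and the new $\aleph_2$, a contradiction. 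Since ``$\Ord$ is inaccessible in $L$'' is itself first-order expressible inside $H_{\omega_2}$ (regularity: no $L$-cofinal function from a smaller ordinal; limit cardinal: $\mu^{+L}$ exists below $\Ord$ for every $\mu$), the same elementarity transfers inaccessibility from $\kappa$ up to $\gamma$. Combining this with the descent above will yield the desired $\theta$-strongly uplifting configuration.
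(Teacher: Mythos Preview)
The paper does not itself prove this theorem --- it is quoted from \cite{Hamkins:2014yu} (and \cite{Minden:2017fr} for item~(5)) --- so there is no in-paper argument to compare against. Your outline matches the standard approach: collapse past $\theta$ with a forcing from the class, apply boldface resurrection with $A$ as the predicate, relativize the resulting $H_\tau$-elementarity down to $L$, and transfer ``$\Ord$ is $L$-inaccessible'' along that elementarity to handle $\gamma$.

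There is, however, a genuine gap. From $\langle H_{\omega_2}^V,\in,A\rangle\prec\langle H_{\omega_2}^{V[g*h]},\in,A^*\rangle$ you correctly obtain $\langle L_\kappa,\in,A\rangle\prec\langle L_\gamma,\in,A^{**}\rangle$ for $A^{**}=A^*\cap L_\gamma$, but $A^{**}$ is only known to lie in $V[g*h]$. The statement ``$\kappa$ is strongly uplifting in $L$'' requires the extending predicate to lie in $L$, and the bare existence of some $B\subseteq L_\gamma$ realizing the elementary diagram of $\langle L_\kappa,\in,A\rangle$ is a second-order assertion over $L_\gamma$ that is \emph{not} downward absolute from $V[g*h]$ to $L$. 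You identify the $L$-inaccessibility of $\gamma$ as ``the main obstacle,'' but this constructibility issue is the harder point. The usual repair is to enrich the predicate before invoking resurrection so that the elementarity itself certifies $A^{**}\in L$: since $A\in L_\eta$ for some $\eta<(\kappa^+)^L$, code into the predicate (alongside $A$) a well-founded extensional relation on $\kappa$ whose collapse is $L_\eta$ together with the point representing $A$; the sentence asserting that $\dot A$ is the set coded at that point then transfers, forcing $A^{**}$ to be an element of some $L_{\eta^*}$ with $\eta^*<(\gamma^+)^L$. This is essentially how the absoluteness-to-$L$ analysis in \cite{Hamkins:2014yu} proceeds, and you should make the step explicit. (A minor omission for~(3): you should also record why $\bfRAp$ gives $\c=\aleph_2$ before identifying the target cardinal.)
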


\section{Combining Resurrection and Maximality}
\label{sec:RA+MP}
Hamkins and Johnstone \cite[Section 6]{Hamkins:2013qv} combine the resurrection axiom with forcing axioms, like $\PFA$ for example, and show that they both hold after a forcing iteration. Fuchs explores combinations of maximality principles for closed forcing notions \cite{Fuchs:2008rt} combined with those for collapse and directed closed forcing notions \cite{Fuchs:2008ve} and hierarchies of resurrection axioms with an emphasis on subcomplete forcing \cite{Fuchs:2018RA}. Trang and Ikegami  \cite{Trang:2017} studied classes of maximality principles combined with forcing axioms.
This section focuses on a different question in a similar vein: is it possible for the resurrection axiom and the maximality principle to hold at the same time? The axioms do not imply each other directly,
$\bfMP$ surely does not imply $\bfRA$, since the consistency strength of $\bfMP$ is that of a fully reflecting cardinal, while $\bfRA$ has the consistency strength a strongly uplifting cardinal. These two cardinals are consistently different, they certainly don't imply each other: if $\kappa$ is fully reflecting, take the least $\gamma$ such that $V_\kappa \prec V_\gamma$. If there isn't such a $\gamma$, then $\kappa$ isn't uplifting anyway. But in $V_\gamma$, we have that $\kappa$ is not uplifting.
There is no implication in the other direction as well. This is because working in a minimal model of 
	$$ \ZFC \; + \; ``V=L" \; + \; \text{ ``there is an uplifting cardinal"}$$ 
(i.e., no initial segment of the model satisfies this theory), we may force over this minimal model to obtain $\RA$. Now $\bfMP$ can't hold in the extension, since letting $\kappa$ be the $\omega_2$ of the extension, if $\bfMP$ were true, then that would imply that $L_\kappa$ is elementary in $L$ -- contradicting the minimality of the model we started with.

This section is dedicated to showing that it is possible for maximality and resurrection to both hold, by combining the techniques showing the consistency of each principle, all in one minimal counterexample iteration. 

An inaccessible cardinal $\kappa$ is \emph{strongly uplifting fully reflecting} so long as it is both strongly uplifting and fully reflecting.

Combining these two large cardinal notions is almost natural. If $\kappa$ is uplifting then there are unboundedly many $\gamma$ such that $V_\kappa \prec V_\gamma$, and if on top of that $\kappa$ is fully reflecting, we add that $V_\kappa \prec V$ as well, where $V$ is in some sense the limit of the $V_\gamma$'s. Moreover, strongly uplifting fully reflecting cardinals are guaranteed to exist in some set-sized transitive model if there are subtle cardinals.

\begin{definition} A cardinal $\delta$ is \emph{subtle} so long as for any club $C \subseteq \delta$ and for any sequence $\mathcal A = \seq{ A_\alpha }{ \alpha \in C }$ with $A_\alpha \subseteq \alpha$, there is a pair of ordinals $\alpha < \beta$ in $C$ such that $A_\alpha=A_\beta \cap \alpha$. \end{definition}

\begin{fact} \label{fact:subtleinacc}
If a cardinal $\delta$ is subtle, then $\delta$ is inaccessible.
\end{fact}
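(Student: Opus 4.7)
The plan is to verify the three defining clauses of inaccessibility---uncountable, regular, and strong limit---separately, in each case by exhibiting a club $C \subseteq \delta$ and a sequence $\seq{A_\alpha}{\alpha \in C}$ tailored so that no pair $\alpha < \beta$ in $C$ can satisfy $A_\alpha = A_\beta \cap \alpha$ unless the targeted property already holds. Uncountability is the quick sanity check: a finite $\delta$ admits the club $\{\max(\delta)\}$ of size one, so no pair $\alpha < \beta$ exists and subtlety fails vacuously; and for $\delta = \omega$ the sequence $A_n = \{n-1\}$ on the club $C = \omega \setminus \{0\}$ gives $A_m \cap n = \emptyset \neq A_n$ for all $n < m$ in $C$, so $\omega$ is not subtle either.

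For regularity, I would assume toward a contradiction that $\lambda := \cof(\delta) < \delta$ and fix a continuous, strictly increasing, cofinal map $f \colon \lambda \to \delta$. Its range intersected with $(\lambda, \delta)$ is a club $C \subseteq \delta$ every element of which exceeds $\lambda$. Setting $A_\alpha = \{f^{-1}(\alpha)\}$, a singleton subset of $\lambda \subseteq \alpha$, the subtlety hypothesis would produce $\alpha < \beta$ in $C$ with $A_\alpha = A_\beta \cap \alpha$; but $f^{-1}(\beta) < \lambda < \alpha$ forces $A_\beta \cap \alpha = A_\beta$, so $f^{-1}(\alpha) = f^{-1}(\beta)$, contradicting injectivity of $f$.

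For the strong limit property, I would suppose $\kappa < \delta$ satisfies $2^\kappa \geq \delta$ and fix an injection $h \colon \delta \to \mathcal{P}(\kappa)$. On the club $C = \set{\alpha < \delta}{\alpha > \kappa}$, define $A_\alpha = h(\alpha)$, which lies inside $\kappa \subseteq \alpha$. For any $\alpha < \beta$ in $C$, $A_\beta \cap \alpha = h(\beta) \cap \alpha = h(\beta)$, so the subtlety equation $A_\alpha = A_\beta \cap \alpha$ reduces to $h(\alpha) = h(\beta)$, contradicting injectivity. The unifying device in the last two arguments---placing every $A_\alpha$ below a fixed cardinal strictly smaller than each $\alpha \in C$ so that $A_\beta \cap \alpha = A_\beta$ and subtlety collapses into a forbidden equality of codes---is the only substantive idea; beyond it, the bookkeeping is routine and no real obstacle appears.
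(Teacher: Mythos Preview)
The paper states this as a \emph{Fact} without proof---it is a standard result, and no argument is supplied in the text. Your proof is correct and is essentially the textbook one: in both the regularity and strong-limit steps you arrange for every $A_\alpha$ to lie below a fixed ordinal smaller than $\min(C)$, so that $A_\beta \cap \alpha = A_\beta$ and the subtlety condition collapses to an equality of codes, contradicting injectivity. Nothing to compare against; your write-up would serve perfectly well as the missing proof.
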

%

\begin{proposition} If $\delta$ is subtle, then it is consistent that there is a strongly uplifting fully reflecting cardinal. Namely, the set $$\set{ \kappa < \delta }{ \text{$V_\delta \models ``\kappa$ is strongly uplifting and $V_\kappa \prec V$"} }$$ is stationary in $\delta$. \end{proposition}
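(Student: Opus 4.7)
The plan is a standard subtlety-based reflection argument. Assume toward a contradiction that there is a club $C \subseteq \delta$ disjoint from the displayed set; since $\delta$ is subtle hence Mahlo, the inaccessibles below $\delta$ form a stationary set, so by intersecting we may further assume every element of $C$ is an inaccessible cardinal.

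For each $\alpha \in C$ the hypothesis forces that either $V_\alpha \not\prec V_\delta$ or $\alpha$ is not strongly uplifting in $V_\delta$. In the first case, I would extract the canonical (least under the G\"odel well-order) pair $(\varphi, \vec x)$ with $\vec x \in V_\alpha$ whose truth value differs between $V_\alpha$ and $V_\delta$; in the second case, extract the canonical pair $(A, \theta)$ with $A \subseteq V_\alpha$ and $\theta < \delta$ such that no inaccessible $\gamma \in (\theta, \delta)$ admits an $A^* \subseteq V_\gamma$ with $(V_\alpha, \in, A) \prec (V_\gamma, \in, A^*)$. Encode the chosen witness as a subset $W_\alpha \subseteq V_\alpha$, and transfer it to a subset $A_\alpha \subseteq \alpha$ via the canonical rank-respecting G\"odel bijection $b_\alpha : V_\alpha \to \alpha$, whose definability across inaccessible $\alpha$ yields the coherence $b_\beta \rest V_\alpha = b_\alpha$ for $\alpha < \beta$ in $C$.

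Subtlety applied to $C$ and $\seq{A_\alpha}{\alpha \in C}$ then delivers $\alpha < \beta$ in $C$ with $A_\alpha = A_\beta \cap \alpha$, which decodes via the coherent bijections to $W_\alpha = W_\beta \cap V_\alpha$. If $\beta$'s failure is of the second type with witness $(A, \theta)$, then after further thinning $C$ so that consecutive elements lie past each other's encoded thresholds, $\beta$ itself is an inaccessible lying in $(\theta, \delta)$; combined with $(V_\alpha, \in, W_\alpha) \prec (V_\beta, \in, W_\beta)$, taking $A^* := A$ witnesses the very extension that $W_\alpha$ was chosen to forbid, a contradiction. The first-type case is handled analogously using $V_\alpha \prec V_\beta \prec V_\delta$. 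The principal obstacle, as is typical in such subtlety arguments, is promoting the set-theoretic coherence $A_\alpha = A_\beta \cap \alpha$ to first-order elementarity of the enriched structures; the standard remedy is to bake sufficient first-order data (a theory, or coherent Skolem witnesses) into the coding of $W_\alpha$, so that matching codes forces matching theories and hence the required elementarity. Consistency of the proposition is then immediate from stationarity: the set is nonempty, so $V_\delta$ is a transitive set model of \ZFC\ containing a strongly uplifting fully reflecting cardinal.
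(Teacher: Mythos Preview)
Your direct subtlety argument is workable in outline, but it is considerably more elaborate than the paper's proof, and your handling of the first case is not correct as written. The paper observes two elementary facts and combines them: since $\delta$ is subtle it is inaccessible, so by the usual downward L\"owenheim--Skolem argument the set $\{\kappa<\delta : V_\kappa \prec V_\delta\}$ is \emph{club} in $\delta$; and Hamkins--Johnstone already proved (by exactly the subtlety-plus-diagram-coding argument you sketch for your second case) that the set of $\kappa<\delta$ that are strongly uplifting in $V_\delta$ is \emph{stationary}. Intersecting a club with a stationary set yields a stationary set, and that is the whole proof.

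Your second case essentially reproduces the Hamkins--Johnstone argument, and your ``standard remedy'' of packing the elementary diagram of $\langle V_\alpha,\in,A_\alpha\rangle$ into $W_\alpha$ is precisely how one promotes $W_\alpha = W_\beta\cap V_\alpha$ to $\langle V_\alpha,\in,A_\alpha\rangle \prec \langle V_\beta,\in,A_\beta\rangle$. So that part is fine once fleshed out. The first case, however, does not go through: from coherence you can at best extract $V_\alpha \prec V_\beta$, but you have no mechanism to obtain $V_\beta \prec V_\delta$---indeed both $\alpha$ and $\beta$ were assumed to \emph{fail} $V_{(-)}\prec V_\delta$, and agreeing with each other on the least counterexample formula does not produce a contradiction with $V_\delta$. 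The clean fix is to do what the paper implicitly does: before applying subtlety, thin your club $C$ to lie inside $\{\kappa : V_\kappa\prec V_\delta\}$. Then case one never arises, only case two remains, and at that point you have rediscovered the paper's decomposition (reflecting part via a club, strongly uplifting part via the cited subtlety argument).
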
 
\begin{proof}
Hamkins and Johnstone \cite[Thm.~7]{Hamkins:2014yu} show that if $\delta$ is subtle, then the set of cardinals $\kappa$ below $\delta$ that are strongly uplifting in $V_\delta$ is stationary. But since $\delta$ is subtle, it must also be inaccessible by Fact \ref{fact:subtleinacc}. Thus in $V_\delta$, by the proof of the downward Lowenheim-Skolem theorem, there is a club $C \subseteq \delta$ of cardinals $\kappa$ such that $V_\kappa \prec V_\delta$; meaning that $\kappa$ is fully reflecting in $V_\delta$. This means that there is some $\alpha < \kappa$ that is both strongly uplifting and fully reflecting in $V_\delta$, giving us the required consistency.
\end{proof}

We can immediately see that if resurrection and maximality both hold, we must have a strongly uplifting fully reflecting cardinal, by combining the results of \ref{thm:fullyreflecting=MP} and \ref{thm:stronglyuplifting=RA}.
\begin{observation}
The following consistency results hold.
\begin{enumerate}
	\item $\bfMP + \bfRA \implies \aleph_1^V$ is strongly uplifting fully reflecting in $L$.
	\item $\bfMPccc + \bfRAccc \implies \c^V$ is strongly uplifting fully reflecting in $L$.
	\item $\bfMPp + \bfRAp \implies \aleph_2^V$ is strongly uplifting fully reflecting in $L$.
	\item $\bfMPc + \bfRAc \implies \aleph_2^V$ is strongly uplifting fully reflecting in $L$.
	\item $\bfMPsc + \bfRAsc \implies \aleph_2^V$ is strongly uplifting fully reflecting in $L$.
\end{enumerate}
\end{observation}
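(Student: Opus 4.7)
The plan is essentially to read off the conclusion from the two previously cited theorems. The notion of being strongly uplifting fully reflecting is defined as the conjunction of being strongly uplifting and being fully reflecting, so it suffices, for each of the five forcing classes, to point to a single cardinal of $V$ that simultaneously witnesses both properties in $L$.

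More concretely, I would run the argument case by case. For the class of all forcing, Theorem \ref{thm:fullyreflecting=MP}(1) shows that $\bfMP$ makes $\aleph_1^V$ fully reflecting in $L$, while Theorem \ref{thm:stronglyuplifting=RA}(1) shows that $\bfRA$ makes the same cardinal $\aleph_1^V$ strongly uplifting in $L$. Since both statements refer to the very same ordinal (computed in $V$), their conjunction is precisely the assertion that $\aleph_1^V$ is strongly uplifting fully reflecting in $L$. The remaining items are identical except for which cardinal is named: for $ccc$ one uses $\c^V$ via parts~(2) of each theorem, and for proper, countably closed, and subcomplete one uses $\aleph_2^V$ via parts~(3), (4), (5) respectively.

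There is essentially no obstacle here beyond verifying this bookkeeping, since the definition of strongly uplifting fully reflecting is purely the conjunction of the two properties and both cited theorems produce the same target cardinal from the hypothesis. In writing the proof I would simply list the five cases, in each one invoke the appropriate parts of Theorems \ref{thm:fullyreflecting=MP} and \ref{thm:stronglyuplifting=RA}, and conclude by the definition of strongly uplifting fully reflecting.
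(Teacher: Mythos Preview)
Your proposal is correct and matches the paper's approach exactly: the paper states just before the Observation that the result follows ``by combining the results of \ref{thm:fullyreflecting=MP} and \ref{thm:stronglyuplifting=RA}'' and gives no further proof. Your case-by-case bookkeeping, noting that the same cardinal appears in the corresponding parts of both theorems, is precisely what is intended.
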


For the other direction of the consistency result, we restate here a restricted version of the lifting lemma \cite[Lemma 17]{Hamkins:2014yu}, using the comments following it, to allow the lifting of certain embeddings to generic extensions. 

\begin{fact}[Lifting Lemma] \label{fact:liftinglemma}
Suppose that $\langle M, \in A \rangle  \prec \langle M^*, \in, A^* \rangle$ are transitive models of $\ZFC$ and $\P$ is an $Ord^M$-length forcing iteration without any condition with full support in $M$. If $G \subseteq \P$ is an $M$-generic filter and $G^* \subseteq \P^*$ is $M^*$-generic with $G = G^* \cap \P$, then $\langle M[G], \in, A, G \rangle \prec \langle M^*[G^*], \in, A^*, G^* \rangle.$
\end{fact}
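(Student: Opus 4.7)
The plan is to verify the elementarity via the Tarski--Vaught test. Fix a formula $\varphi(y, \vec{x})$ in the language of set theory augmented by unary predicates interpreted by $A$ and $G$ (resp.\ $A^*$ and $G^*$), and parameters $\vec{b} \in M[G]$. We must show that if $\langle M^*[G^*], \in, A^*, G^* \rangle \models \exists y\, \varphi(y, \vec{b})$, then there is a witness in $M[G]$ seen by $\langle M[G], \in, A, G \rangle$.

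The preparatory step sets up the name calculus. Each parameter $b_i$ has a $\P$-name $\tau_i \in M$. By the ``no condition with full support'' hypothesis, every $\P$-condition, hence every $\P$-name in $M$, has support bounded by some $\alpha < \Ord^M$. By the elementarity $\langle M, \in, A\rangle \prec \langle M^*, \in, A^* \rangle$ applied to the formulas defining the iteration, the initial segments match: $\P_\alpha = \P^*_\alpha$ for all $\alpha < \Ord^M$, and this is literally a common subset of $M \subseteq M^*$. Since $G \cap \P_\alpha = G^* \cap \P^*_\alpha$ for each such $\alpha$, a rank induction gives $\tau_{i,G} = \tau_{i,G^*}$, so the parameters sit identically in both extensions.

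Next, convert the assumed existential in $M^*[G^*]$ into a forcing statement. Represent $A^*$ and $G^*$ by their canonical class names $\dot{A^*} = \{(\check a, \mathbbm{1}) : a \in A^*\}$ and the standard $\dot{G^*}$. By the forcing theorem in $M^*$ there is $p^* \in G^*$ with $p^* \forces^{M^*}_{\P^*} \exists y\, \varphi(y, \vec\tau, \dot{A^*}, \dot{G^*})$. Choose $\alpha < \Ord^M$ bounding the supports of $\vec\tau$ and $p^*$; then the relevant forcing statement reduces to one about the set-sized iteration $\P^*_\alpha = \P_\alpha$ and the $\P_\alpha$-name $\dot{G_\alpha}$ for the restricted generic. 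Applying the elementarity to the assertion ``there exist $p \leq p^* \rest \alpha$ and a $\P_\alpha$-name $\sigma$ such that $p \forces_{\P_\alpha} \varphi(\sigma, \vec\tau, \dot{A}, \dot{G_\alpha})$'' pulls the witness down into $M$, producing $p \in G$ and $\sigma \in M$ with $p \forces^M_{\P_\alpha} \varphi(\sigma, \vec\tau, \dot{A}, \dot{G_\alpha})$; then $\sigma_G \in M[G]$ is the desired witness by the forcing theorem in $M$.

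The main obstacle is justifying that the forcing relation can absorb the class predicates $A, A^*, G, G^*$ and still be transferred by the elementarity between $\langle M, \in, A\rangle$ and $\langle M^*, \in, A^*\rangle$. The no-full-support assumption is exactly what permits this: it localizes every relevant forcing assertion to the set-forcing $\P_\alpha$ inside $M$, over which the forcing relation for formulas involving $\dot{A}$ and $\dot{G_\alpha}$ is definable from $A \cap V_\beta^M$ for sufficiently large $\beta$, and hence transferable through the elementarity. Without the hypothesis, conditions could have supports cofinal in $\Ord^M$, the forcing relation for $\varphi$ would be genuinely class-sized, and the elementarity between $M$ and $M^*$ would be unable to move such statements between the two models.
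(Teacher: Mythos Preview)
The paper does not give its own proof of this fact; it is quoted as a restricted form of \cite[Lemma~17]{Hamkins:2014yu}. Assessing your argument directly: the Tarski--Vaught strategy is the right one, but the step ``the relevant forcing statement reduces to one about the set-sized iteration $\P^*_\alpha=\P_\alpha$ and the $\P_\alpha$-name $\dot G_\alpha$ for the restricted generic'' does not go through. Replacing the class predicate $\dot G^*$ by the set $\dot G_\alpha$ changes the formula: $\varphi$ may refer to the generic in ways that do not localize to stage $\alpha$ (for instance, it may assert the existence of a condition in $G$ with support beyond $\alpha$), so $p^*\forces_{\P^*}\exists y\,\varphi(y,\vec\tau,\dot A^*,\dot G^*)$ does not yield the $\P_\alpha$-statement you write down. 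There are two secondary problems as well: $p^*\in G^*$ need not have support below $\Ord^M$, so the sentence ``choose $\alpha<\Ord^M$ bounding the support of $p^*$'' is unjustified; and even if your existential held in $M$, the $p\le p^*\rest\alpha$ it produces has no reason to lie in $G$.

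The repair is to reverse the direction in which you invoke the forcing theorem. The no-full-support hypothesis does guarantee that the class-forcing relation $\forces_\P$ (with the canonical class names $\dot A,\dot G$) is uniformly definable in $\langle M,\in,A\rangle$, and by elementarity the \emph{same} defining formula gives $\forces_{\P^*}$ in $\langle M^*,\in,A^*\rangle$; hence the two relations agree on conditions and names lying in $M$. Now pick, by density, a condition $p\in G$ which \emph{decides} $\exists y\,\varphi(y,\vec\tau)$ over $\P$ in $M$. If $p$ forces the existential you are done; if $p$ forces its negation then, since $p\in G\subseteq G^*$ and the relations agree at $p$, the existential also fails in $M^*[G^*]$, contradicting the hypothesis. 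The point is that the deciding condition must be found on the small side and pushed up, rather than pulled down from $G^*$, precisely because conditions in $G^*$ may have support cofinal in or beyond $\Ord^M$.
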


\begin{theorem}\label{main} Let $\kappa$ be a strongly uplifting fully reflecting cardinal. Then there is a forcing extension in which both $\bfRA$ and $\bfMP$ hold, and $\kappa=\c=\aleph_1$. \end{theorem}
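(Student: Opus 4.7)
The plan is to combine Hamkins' ``running-through-the-house'' lottery-sum iteration (used to force $\bfMP$) with the parallel least-counterexample lottery-sum iteration (used to force $\bfRA$) into a single length-$\kappa$ finite-support iteration $\P_\kappa = \langle \P_\alpha, \dot\Q_\alpha \st \alpha < \kappa \rangle$. At each stage $\alpha < \kappa$, guided by a bookkeeping function that enumerates candidate $(\varphi, \dot a)$ buttons and $(\dot\Q, \dot A)$ resurrection requests, $\dot\Q_\alpha$ is the $V^{\P_\alpha}$-lottery sum of all forcing notions of sufficiently small rank (bounded below the next $V^{\P_\alpha}$-inaccessible), arranged so that the lottery is rich enough to push the relevant maximality button or to select the relevant resurrection forcing at each stage. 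Since $\kappa$ is inaccessible, $\P_\kappa$ is $\kappa$-c.c.\ of size $\kappa$, and standard density arguments together with cofinally many collapse forcings in the lotteries will yield $\kappa = \c = \aleph_1$ in any $V$-generic extension $V[G]$.

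For $\bfMP$ in $V[G]$, given $a \in H_{\omega_1}^{V[G]}$ and a sentence $\varphi$ such that $\varphi(a)$ is forceably necessary, I would mimic the proof of Theorem~\ref{thm:MP=fullyreflecting}(1): the parameter $a$ appears at some stage $\alpha_0$, and because the tail iteration $\P_{[\alpha_0,\kappa)}$ is definitionally identical to $\P_\kappa$, the full reflection $V_\kappa \prec V$ produces a stage $\beta \geq \alpha_0$ at which the lottery contains (and, by the least-counterexample choice rule, selects) a forcing making $\varphi(a)$ necessary. Since necessity is preserved under further forcing, $\varphi(a)$ holds in $V[G]$.

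For $\bfRA$ in $V[G]$, given $\Q \in V[G]$ and $A \subseteq H_{\omega_1}^{V[G]}$, I would fix a $\P_\kappa$-name coding $(\Q,A)$ and regard it as a subset of $V_\kappa$. Applying strong uplifting at $\kappa$, choose an inaccessible $\gamma$ and $A^* \subseteq V_\gamma$ with $\langle V_\kappa, \in, A \rangle \prec \langle V_\gamma, \in, A^*\rangle$. The mirror iteration $\P_\gamma$ defined by the same scheme inside $V_\gamma$ extends $\P_\kappa$ as an initial segment; at some stage $\beta \in [\kappa, \gamma)$ the $\P_\gamma$-lottery offers $\Q$, and the remainder of $\P_\gamma$ then plays the role of the resurrection tail $\R$. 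Invoking the Lifting Lemma (Fact~\ref{fact:liftinglemma}), I would conclude $\langle H_{\omega_1}^{V[G]}, \in, A\rangle \prec \langle H_{\omega_1}^{V_\gamma[G_\gamma]}, \in, A^{**}\rangle$ for an appropriate $A^{**}$, which is precisely the elementarity required by $\bfRA$.

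The main obstacle is the combined bookkeeping and definitional uniformity: the iteration must be specified so that every potential $(\varphi, a)$ button and every potential $(\Q, A)$ resurrection request is addressed cofinally in $\kappa$, while the defining scheme of $\P_\kappa$ and of its mirror $\P_\gamma$ must be absolutely the same between $V_\kappa$ and $V_\gamma$ under the strong uplifting embedding, and between $V_\kappa$ and $V$ under full reflection. A further delicacy is that the parameters $a$ and $A$ exist only in the extension $V[G]$; both verifications therefore route through $V_\kappa$-names chosen by the bookkeeping, and one must check that forceable necessity and the resurrection witness are absolute enough between the relevant pairs of models for the two reflection principles to simultaneously apply.
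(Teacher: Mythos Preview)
Your overall plan and your argument for $\bfMP$ are essentially the paper's. The gap is in the $\bfRA$ verification. You write that ``at some stage $\beta \in [\kappa,\gamma)$ the $\P_\gamma$-lottery offers $\Q$, and the remainder of $\P_\gamma$ then plays the role of the resurrection tail $\R$.'' But resurrection requires, for the given $\Q$, a tail $\dot\R$ with $H_\c^{V[G]} \prec H_\c^{V[G][g][h]}$ where $g \subseteq \Q$ is the \emph{very next} generic after $G$. If $\beta>\kappa$, the extended iteration factors as $\P_\kappa * \P_{[\kappa,\beta)} * \dot\Q * \cdots$, and the Lifting Lemma only yields elementarity of $H_\c^{V[G]}$ into an extension that begins with $\P_{[\kappa,\beta)}$ rather than with $\Q$. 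That does not witness $\bfRA$ for $\Q$.

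The paper avoids this by arguing by contradiction and choosing $\Q$ to be a \emph{least-rank} counterexample to $\bfRA$ in $V[G]$. The stage-$\kappa$ lottery in the extended iteration $\P^*$ (as defined inside $V_\gamma$) is precisely the set of current least-rank counterexamples to $\bfRA$ together with least-rank $\bfMP$ buttons; hence $\dot\Q$ lies in that lottery, and below a condition opting for it one has $\P^* = \P * \dot\Q * \dot{\Ptail^*}$ with $\Q$ at stage $\kappa$ exactly. (For the $ccc$ case the paper instead uses a strongly-uplifting Laver function arranged so that $\ell^*(\kappa)=\dot\Q$; this is the genuinely anticipatory device your bookkeeping lacks.) Your ``lottery of all forcings of sufficiently small rank'' does not obviously repair this: you would need the rank bound at stage $\kappa$ to be computed identically in $V_\kappa$ and in $V_\gamma$, to exceed the rank of $\dot\Q$ for the arbitrary $\Q$ handed to you, and yet to keep every stage small enough that the iteration stays inside $V_\kappa$ and retains the $\kappa$-cc. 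None of these are verified in the proposal.
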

\begin{proof}
Let $\kappa$ be strongly uplifting fully reflecting. Below we define $\P$ to be the least-counterexample to $\bfRA + \bfMP$ lottery sum finite support 
iteration of length $\kappa$. We generically pick, using the lottery sum, whether at each stage to force with a least-rank counterexample to the maximality principle or a least-rank counterexample to the boldface resurrection axiom.

In particular, define the poset $\P = \P_\kappa = \seq{ ( \P_\alpha, \dot{\Q}_\alpha ) }{ \alpha < \kappa } $ as follows: 

At stage $\alpha$, consider all of the sentences with parameters having names in $V_\kappa^{\P_\alpha}$ that are not true in $V_\kappa^{\P_\alpha}$, but can be forced by some poset $\dot \Q$ to be necessary. Let $\mathcal M$ be the collection of such possible forcing notions $\dot \Q$ in $V_\kappa^{\P_\alpha}$ of minimal rank in $V^{\P_\alpha}_\kappa$. In other words, $\mathcal M$ contains the current minimal rank counterexamples to the boldface maximality principle, as seen by $V_\kappa$. Since $\kappa$ is fully reflecting, $V_\kappa$ and $V$ are in agreement on which posets are counterexamples to maximality, $\mathcal M$ can be thought of as the collection of current minimal rank counterexamples to $\bfMP$.

Similarly we take $\mathcal R$ to contain all of the current minimal rank counterexamples to $\bfRA$. To be precise, let $\mathcal R$ be the collection of forcing notions $\dot \Q$ of minimal rank such that there is $\dot A \subseteq H^{V^{\P_\alpha}}_{\c}$ where after any further forcing $\ddot \R$, it is not the case that $\langle H^{V^{\P_\alpha}}_{\c}, \in, \dot A \rangle  \prec \langle H_{\c}^{V^{\P_\alpha *\dot \Q *\ddot \R}}, \in, \dot A^* \rangle$ for any $\dot A^*$. 

Then take
$\P_{\alpha+1} = \P_\alpha * \dot{\Q}_\alpha$ where $\dot{\Q}_\alpha$ is a term for the lottery sum $\bigoplus \mathcal R \oplus \bigoplus \mathcal M$.
Limit stages are taken care of via finite support.

Let $G \subseteq \P$ be generic. 

Since it is dense in this iteration for sets of size less than $\kappa$ to be collapsed to be countable (indeed, for any $a \in V_\kappa$, the statement positing $a$ is countable is forceably necessary) it follows that $\kappa \leq \omega_1$ in $V[G]$. It is straightforward to show that the iteration has the $\kappa$-$cc$. Thus $\kappa$ remains a cardinal, so $\kappa = \aleph_1$ in the extension. Additionally, a density argument shows that unboundedly often in the iteration, a new real is added. (Alternatively, the finite support will itself add a new real at every limit stage of cofinality $\omega$.) Either way, this means that $\kappa=\aleph_1=\c$ in $V[G]$ as claimed.

We need to show that both $\bfRA$ and $\bfMP$ hold in $V[G]$. 

\begin{claimno} $\bfMP$ holds in $V[G]$. \end{claimno}
\begin{proof}[Pf]
Assume it fails; the sentence $\varphi(\vec a)$, where $\vec a \in H_{\omega_1}^{V[G]}$ is a parameter set, has the property that:
	$\text{$V[G] \models ``\varphi(\vec a)$ is forceably necessary but $\varphi(\vec a)$ is false."}$
Choose a condition $p \in G$ that forces the above statement.
$\P$ has the $\kappa$-$cc$, since all of the forcing notions are small -- so at no stage in the iteration is $\kappa$ collapsed. This means that there has to be some stage where $\vec a$ appears.
So there is some stage in the iteration beyond the support of $p$, say $\alpha < \kappa$, where $\vec a \in V_\kappa[G_\alpha]$. Specifically $\varphi(\vec a)$ is an available button at stage $\alpha$, since after the rest of the iteration, $\Ptail$ where $\P=\P_\alpha * \Ptail$, we have that $\varphi(\vec a)$ is forceably necessary. Indeed, this is reasoning available in $V[G_\alpha]$, which thus sees that $\varphi(\vec a)$ is a button. By elementarity, as $\kappa$ is fully reflecting, it follows that
	$V_\kappa[G_\alpha] \models ``\varphi(\vec a) \text{ is forceably necessary"}.$
From that point on, $\varphi(\vec a)$ continues to be a button, since we have that $\alpha$ is beyond the support of $p \in G$.
Thus it is dense, in $\P$, for $\varphi(\vec a)$ to be ``pushed" at some point after stage $\alpha$, say $\beta$. 
So we have $\beta < \kappa$ such that there is some $\Q$ forcing $\varphi(\vec a)$ to be necessary in $V_\kappa[G_\beta]$. Let $H \subseteq \Q$ be generic over $V[G_\beta]$ so that there is some $\Gtail$ generic for the rest of $\P$ satisfying $V[G_\beta][H][\Gtail]=V[G]$. The sentence $\varphi(\vec a)$ is now necessary in $V_\kappa[G_\beta][H]$. But then since $V_\kappa[G_\beta][H] \prec V[G_\beta][H]$, as we are still in an initial segment of the full iteration, we have that $\varphi(\vec a)$ is necessary in $V[G_\beta][H]$, by elementarity. Therefore since the rest of the iteration is a forcing notion in its own right, $\varphi(\vec a)$ is true in $V[G_\beta][H][\Gtail]=V[G]$, contradicting our assumption that $\varphi(\vec a)$ is false in $V[G]$.
\end{proof}

\begin{claimno} 
$\bfRA$ holds in $V[G]$
\end{claimno}
\begin{proof}[Pf]
Assume toward a contradiction that $\bfRA$ fails. This means we can choose a least rank counterexample, a forcing $\Q$ in $V[G]$ which supposedly cannot be resurrected. Let $A \subseteq H^{V[G]}_{\c}$ be its associated predicate. Let $\dot{\Q}$ be a name for $\Q$ of minimal rank. Since $\P$ has the $\kappa$-$cc$, there must be a name for the predicate in the extension such that $\dot A \subseteq H_{\kappa}$ with $A = \dot A^G$.

We will argue that $\dot \Q$ appears at stage $\kappa$ of the same exact iteration, except defined in some larger $V_\gamma[G]=V_\gamma^{V[G]}$ where $\gamma$ is inaccessible. 
Use the strong uplifting property of $\kappa$, and code the iteration $\P$ as a subset of $\kappa$, to find a sufficiently large inaccessible cardinal $\gamma$ so that 
	$\langle V_\kappa, \in, \P, \dot A \rangle \prec \langle V_\gamma, \in, \P^*, \dot A^* \rangle,$
where $\P^*$ is the least-counterexample to $\bfRA+\bfMP$ lottery sum iteration of length $\gamma$ as defined in $V_\gamma$. Obtaining a large enough $\gamma$ involves a process of closing under least-rank counterexamples. Not only does $V_\gamma$ need to agree with $V$ about the rank of least-rank counterexamples to the resurrection axiom throughout the iteration $\P$, it must also compute the least-rank counterexamples to the maximality principle appropriately as well. Since $V_\kappa \in V_\gamma$, and the ranks throughout the maximality iteration were computed in $V_\kappa$, the minimal ranks are guaranteed to be computed properly in $V_\gamma$.
Indeed we have argued above that $\P^*$ is defined the same way as $\P$ below stage $\kappa$, so we may assume below a condition in $G$ that $\dot \Q$ may be picked at stage $\kappa$. So below a condition that opts for $\dot \Q$ at the stage $\kappa$ lottery we may say that $\P^*$ factors as $\P * \dot{\Q} * \dot{\Ptail^*}$. 
Let $H*\Gtail^* \subseteq \Q * \Ptail^*$ be $V[G]$-generic. Letting $G^*=G*H*\Gtail^*$, this means that $G^* \subseteq \P^*$ is  generic over $V$. 

Thus by the lifting lemma (Fact \ref{fact:liftinglemma}) the strongly uplifting embedding 
	$\langle V_\kappa, \in, \P, \dot A \rangle \prec \langle V_\gamma, \in, \P^*, \dot A^* \rangle$
lifts to 
	$\langle V_\kappa[G], \in, \P, \dot A, G \rangle \prec \langle V_\gamma[G^*], \in, \P^*, \dot A^*, G^* \rangle $
in $V[G^*]$. In particular,
	$\langle V_\kappa[G], \in, \P, A \rangle \prec \langle V_\gamma[G^*], \in, \P^*, A^* \rangle.$
We have that 
	$V_\kappa[G]=H_\kappa^{V[G]} = H_\c^{V[G]},$
since $\kappa$ is inaccessible and $\P$ has the $\kappa$-$cc$. We can argue the same way as above, replacing $\kappa$ with $\gamma$, to get that 
	$V_\gamma[G^*]=H_\gamma^{V[G^*]} = H_\c^{V[G^*]}.$ 
This establishes 
	$\langle H_\c^{V[G]}, \in, A \rangle \prec \langle H_\c^{V[G^*]}, \in, A^* \rangle.$ 
\end{proof}
Therefore $\bfRA$ and $\bfMP$ both hold as desired.
\end{proof}

\begin{theorem} \label{thm:RA+MP} Let $\kappa$ be a strongly uplifting fully reflecting cardinal. Then there are forcing extensions in which we have the following:
\begin{enumerate}
	\item\label{item:proper} $\bfRAp + \; \bfMPp + \; \kappa= \c = \aleph_2$. 
	\item\label{item:ctblyclosed} $\bfRAc + \; \bfMPc + \; \kappa=\aleph_2 + \; \CH$. 
	\item\label{item:subcomplete} $\bfRAsc + \; \bfMPsc + \; \kappa=\aleph_2+ \; \CH$. 
\end{enumerate}	
\end{theorem}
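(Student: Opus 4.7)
The plan is to imitate the proof of Theorem \ref{main}, adapting the iteration and preservation arguments for each class $\Gamma \in \{\text{proper}, \text{countably closed}, \text{subcomplete}\}$. For each class I would define $\P = \P_\kappa$ as the least-counterexample lottery sum iteration of length $\kappa$: at stage $\alpha$ the iterand is a name for $\bigoplus \mathcal{R}_\alpha \oplus \bigoplus \mathcal{M}_\alpha$, where $\mathcal{M}_\alpha$ collects the $\Gamma$-forcings in $V_\kappa^{\P_\alpha}$ of least rank witnessing a failure of $\bfMP_\Gamma$ for parameters in $H_{\omega_2}^{V^{\P_\alpha}}$ (or $H_\c^{V^{\P_\alpha}}$ in the proper case), and similarly $\mathcal{R}_\alpha$ collects the least-rank $\Gamma$-forcings witnessing a failure of $\bfRA_\Gamma$. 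The support and the choice of iterands must be adjusted to the class: countable support with proper iterands for (\ref{item:proper}); countable support with countably closed iterands for (\ref{item:ctblyclosed}); and revised countable support with subcomplete iterands that force $\CH$ for (\ref{item:subcomplete}).

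Having set up the iteration, I would verify four preservation facts. First, each class is closed under lottery sums (standard, and for subcompleteness this is in Jensen's treatment), so $\dot{\Q}_\alpha$ is a $\Gamma$-name for a member of $\Gamma$. Second, the iteration stays in $\Gamma$: Shelah's countable support iteration theorem for (\ref{item:proper}), the triviality that a countable support iteration of countably closed forcing is countably closed for (\ref{item:ctblyclosed}), and Jensen's subcomplete iteration theorem for (\ref{item:subcomplete}). Third, the iteration has the $\kappa$-$cc$ since every iterand lies in $V_\kappa$ and $\kappa$ is inaccessible, so $\kappa$ is preserved. Fourth, density arguments (the statement ``$|a|=\aleph_1$'' is a $\Gamma$-button for each $a \in V_\kappa$) collapse every cardinal strictly between $\omega_1$ and $\kappa$ to $\aleph_2$, so $\kappa = \aleph_2$ in $V[G]$; and for (\ref{item:ctblyclosed}), (\ref{item:subcomplete}) no reals are added so $\CH$ persists, while for (\ref{item:proper}) a density argument gives $\c = \aleph_2 = \kappa$.

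The verification that $\bfMP_\Gamma$ holds in $V[G]$ is a word-for-word rerun of the corresponding claim in the proof of Theorem \ref{main}: any supposed counterexample $\varphi(\vec a)$ in $V[G]$ is, by full reflection of $\kappa$ together with the $\kappa$-$cc$, seen to be a $\Gamma$-button at some stage $\alpha < \kappa$ past the support of a condition forcing the failure; density forces it to be pushed, and $\Gamma$-necessity lifts through the rest of the iteration since the tail is in $\Gamma$. The verification that $\bfRA_\Gamma$ holds in $V[G]$ again follows the Theorem \ref{main} template: for a putative minimal counterexample $\dot\Q$ with predicate $\dot A$, apply strong upliftingness of $\kappa$ to obtain an inaccessible $\gamma$ and an elementary extension $\langle V_\kappa, \in, \P, \dot A\rangle \prec \langle V_\gamma, \in, \P^*, \dot A^*\rangle$, where $\P^*$ is the corresponding length-$\gamma$ iteration defined inside $V_\gamma$; below a condition opting for $\dot\Q$ at stage $\kappa$ in $\P^*$, factor $\P^* \cong \P * \dot\Q * \dot{\Ptail}^*$, take a generic $H * \Gtail^*$, apply the Lifting Lemma (Fact~\ref{fact:liftinglemma}), and use that the $\kappa$-$cc$ (respectively inaccessibility at $\gamma$) identifies $V_\kappa[G]$ with $H_{\omega_2}^{V[G]}$ (respectively $H_\c^{V[G]}$ in the proper case) and similarly at $\gamma$, to obtain the required resurrection.

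The main obstacle is case (\ref{item:subcomplete}). Unlike finite or countable support, Jensen's subcomplete iteration theorem has side conditions — iterands must force $\CH$ and the iteration must be done with revised countable support — so I would need to check that the least-counterexample lottery sums $\bigoplus \mathcal{R}_\alpha \oplus \bigoplus \mathcal{M}_\alpha$ can be chosen within the class of subcomplete posets forcing $\CH$, invoking the relevant closure and preservation lemmas from \cite{Minden:2017fr}. A secondary subtlety, common to all three cases, is ensuring $\P^*$ genuinely extends $\P$ below some condition of $G$: this requires that in $V_\gamma$ the definitions of $\mathcal{M}_\alpha$ and $\mathcal{R}_\alpha$ for $\alpha < \kappa$ compute the same minimal ranks as in $V$, which follows because $V_\kappa \in V_\gamma$ and the relevant ranks are absolute between $V_\kappa$, $V_\gamma$, and $V$ by fully reflecting strong upliftingness — exactly the point exploited in Theorem \ref{main}.
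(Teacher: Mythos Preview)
Your proposal is correct and follows essentially the same approach as the paper's proof, which likewise uses Theorem~\ref{main} as a blueprint with the iteration relativized to $\Gamma$ and the support adapted to the class (countable for proper and countably closed, revised countable for subcomplete). The only notable differences are that the paper derives $\CH$ in cases~(\ref{item:ctblyclosed}) and~(\ref{item:subcomplete}) simply by observing that $\bfMPc$, $\bfRAc$, $\bfMPsc$, and $\bfRAsc$ each imply $\CH$, and it does not impose your ``iterands force $\CH$'' side condition on the subcomplete RCS iteration, citing \cite{Jensen:2014} directly for preservation of subcompleteness.
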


\begin{proof}
For all of these arguments, use Theorem \ref{main} as a blueprint. The definition of the iteration is always the same, with the caveat that the forcing notions are always taken to be in the relevant class we are thinking about (where $\Gamma$ is the class of proper forcing notions when we show (\ref{item:proper}), and so on). Let's repeat the description of the iteration as before, relativized to a forcing class $\Gamma$.

Each time we define an iteration $\P = \P_\kappa = \seq{ ( \P_\alpha, \dot{\Q}_\alpha ) }{ \alpha < \kappa } $ as follows: 

At stage $\alpha$, consider all of the sentences with parameters having names in $V_\kappa^{\P_\alpha}$ that are not true in $V_\kappa^{\P_\alpha}$, but can be forced by some poset $\dot \Q \in \Gamma^{V_\kappa^{\P_\alpha}}$ to be necessary. Let $\mathcal M$ be the collection of such possible forcing notions of minimal rank in $V^{\P_\alpha}_\kappa$ for which the above holds. So $\mathcal M$ contains the minimal rank counterexamples to $\bfMP_\Gamma$.

Additionally, let $\mathcal R$ be the collection of forcing notions $\dot \Q \in \Gamma^{V^{\P_\alpha}}$ of minimal rank such that there is $\dot A \subseteq H^{V^{\P_\alpha}}_{\c}$ where after any further forcing $\ddot \R \in \Gamma^{V^{\P_\alpha*\dot{\Q}}}$ and for all $\dot A^*$, it is not the case that $\langle H^{V^{\P_\alpha}}_{\c}, \in, \dot A \rangle  \prec \langle H_{\c}^{V^{\P_\alpha *\dot \Q *\ddot \R}}, \in, \dot A^* \rangle$. So $\mathcal R$ contains the minimal rank counterexamples to $\bfRA_\Gamma$.

Then take
$\P_{\alpha+1} = \P_\alpha * \dot{\Q}_\alpha$ where $\dot{\Q}_\alpha$ is a term for the lottery sum $\bigoplus \mathcal R \oplus \bigoplus \mathcal M$.

The argument for each forcing class follows the above template, granted a useful support is used. For (\ref{item:proper}) and (\ref{item:ctblyclosed}), use countable support. 
For (\ref{item:subcomplete}), revised countable support works, after seeing that such iterations of subcomplete forcing are subcomplete \cite{Jensen:2014}.

All of the forcing classes considered here contain the forcing to collapse the size of sets to $\omega_1$, and in each iteration, it is dense to do so. This means that $\kappa \leq \omega_2$ in each of these extensions. It also is not difficult to show that each of these iterations have the $\kappa$-$cc$. Thus $\kappa=\aleph_2$ in each of these forcing extensions. With proper forcing, a density argument similar to that of the proof of Theorem \ref{main} tells us $\kappa=\c$ in the extension. For countably closed and subcomplete forcing, in fact $\bfMPc$, $\bfRAc$, $\bfRAsc$, and $\bfMPsc$ all imply $\CH$, so $\CH$ will hold in these extensions.
\end{proof}

The iteration as defined is clearly ill-suited for $ccc$ forcings, since $ccc$ forcings are not closed under lottery sums. 
To resolve this let's  combine the arguments from \cite{Hamkins:2014yu} and \cite{Hamkins:2003jk}. This method could also be used for other forcing classes. 



\begin{definition} \cite{Hamkins:2014yu}
 A \emph{Laver function $\ell$ for a strongly uplifting cardinal $\kappa$} is a partial function from $\kappa$ to $V_\kappa$ satisfying that for every $A \subseteq \kappa$, every ordinal $\theta$, and every set $x$, there is a proper elementary extension $\langle V_\kappa, \in, A, \ell \rangle \prec \langle V_\gamma, \in, A^*, \ell^* \rangle$ where $\gamma \geq \theta$ is inaccessible and $\ell^*(\kappa)=x$.
 \end{definition}
 
 By \cite[Thms.~2~\&~11]{Hamkins:2014yu} every strongly uplifting cardinal has such a Laver function in $L$.

\begin{theorem} \label{thm:cccRA+MP} Let $\kappa$ be a strongly uplifting fully reflecting cardinal. Then there is a forcing extension of $L$ in which both $\bfRAccc$ and $\bfMPccc$ hold, and $\kappa = \c$.
\end{theorem}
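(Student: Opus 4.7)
The plan is to adapt the least-counterexample blueprint of Theorem~\ref{main}, but, since the class of $ccc$ forcings is not closed under lottery sums, replace the lottery with a single finite-support bookkeeping iteration driven simultaneously by a Laver function (taking care of resurrection) and by a Hamkins-style enumeration of buttons (taking care of maximality). Working in $L$, I would fix a Laver function $\ell \colon \kappa \to L_\kappa$ for the strongly uplifting fully reflecting cardinal $\kappa$, and fix an $L$-definable enumeration of potential $ccc$-buttons (sentences with parameters $\vec a \in H_\c$).

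Partition $\kappa$ into two disjoint cofinal $L$-definable classes $E_r$ (``resurrection stages'') and $E_m$ (``maximality stages'') -- for instance, $E_r$ the even ordinals and $E_m$ the odd ordinals -- so that any limit ordinal, in particular $\kappa$ itself, lies in $E_r^*$ under any elementary extension. Define the finite-support iteration
\[
\P = \langle (\P_\alpha, \dot{\Q}_\alpha) : \alpha < \kappa \rangle
\]
in $L$ by: at $\alpha \in E_r$, take $\dot{\Q}_\alpha = \ell(\alpha)$ if $\ell(\alpha)$ is a $\P_\alpha$-name for a $ccc$ forcing (else trivial); at $\alpha \in E_m$, let $\dot{\Q}_\alpha$ be a minimal-rank $\P_\alpha$-name for a $ccc$ forcing that presses the next pending button from the enumeration (else trivial). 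All iterands being $ccc$, finite support makes $\P$ itself $ccc$, hence $\kappa$-$cc$, so $\kappa$ is preserved as a cardinal; a standard density argument (cofinal addition of new reals) then gives $\kappa = \c$ in $V[G]$.

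To verify $\bfMPccc$ in $V[G]$, I would repeat the contradiction from Theorem~\ref{main} using $E_m$: a counterexample $\varphi(\vec a)$ with $\vec a \in H_\c^{V[G]}$ must have $\vec a$ appear by some stage $\alpha < \kappa$ by $\kappa$-$cc$, full reflection ensures the button is recognized in $V_\kappa^{\P_\alpha}$, and the bookkeeping at cofinally many $E_m$-stages eventually presses it at some $\beta > \alpha$, making $\varphi(\vec a)$ necessary in $V[G]$ -- contradiction. To verify $\bfRAccc$, given a $ccc$ counterexample $\Q \in V[G]$ with predicate $A \subseteq H_\c^{V[G]}$, choose minimal-rank $\P$-names $\dot\Q, \dot A$. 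Using the strongly uplifting Laver property, find an inaccessible $\gamma$ and an elementary extension
\[
\langle V_\kappa, \in, \dot A, \ell, \P \rangle \prec \langle V_\gamma, \in, \dot A^*, \ell^*, \P^* \rangle
\]
with $\ell^*(\kappa) = \dot\Q$. By elementarity $\P^*$ is given by the same blueprint executed in $V_\gamma$, agreeing with $\P$ below $\kappa$; since $\kappa \in E_r^*$ and $\ell^*(\kappa) = \dot\Q$ is a $ccc$ name, the construction selects $\dot{\Q}^*_\kappa = \dot\Q$. Thus $\P^* \cong \P * \dot\Q * \dot\Ptail^*$; passing to a $V[G]$-generic $H * \Gtail^* \subseteq \Q * \Ptail^*$ and applying the lifting lemma (Fact~\ref{fact:liftinglemma}) yields $\langle H_\c^{V[G]}, \in, A \rangle \prec \langle H_\c^{V[G^*]}, \in, A^* \rangle$ as required.

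The main obstacle is keeping the two bookkeepings from sabotaging each other: the disjoint cofinal partition $E_r \cup E_m$ resolves this, while elementarity guarantees that $\kappa$ falls in $E_r^*$ so that the Laver target $\dot\Q$ is indeed selected at stage $\kappa$ of the extended iteration. Smaller technical points include verifying that ``is a $ccc$ forcing name'' and ``is a minimal-rank $\bfRAccc$ counterexample at stage $\alpha$'' reflect correctly between $V_\kappa$ and $V_\gamma$ (as first-order statements over appropriate initial structures), and confirming that finite support preserves $ccc$ across the iteration.
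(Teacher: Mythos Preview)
Your proposal is correct and follows essentially the same approach as the paper. The only cosmetic difference is in the partition of stages: the paper assigns maximality bookkeeping to successor stages and the Laver function to limit stages, whereas you use odd versus even ordinals; since $\kappa$ is a limit ordinal, either choice guarantees that stage $\kappa$ of the extended iteration $\P^*$ falls on the Laver side and hence selects $\ell^*(\kappa)=\dot\Q$, which is the key point for the resurrection half.
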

\begin{proof}
Let $\kappa$ be strongly uplifting fully reflecting. By \cite[Thm.~2]{Hamkins:2014yu}, $\kappa$ is strongly uplifting in $L$. Since $L \models V_\kappa = L_\kappa$, it follows that $\kappa$ is fully reflecting in $L$ as well. Work in $L$. 

Let $\ell$ be a strongly uplifting Laver function for $\kappa$. Let $\vec{\varphi}=\seq{ \varphi_\alpha(\dot{\vec a}) }{\alpha <\kappa}$ enumerate, with unbounded repetition, all sentences in the language of set theory with names for parameters in $V_\kappa$ coming from a forcing extension by a forcing of size less than $\kappa$. 

Define a finite support $\kappa$-iteration of $ccc$ forcing so that at successor stages $\alpha=\beta+1$, the forcing $\Q_\alpha$ is least rank forcing $\varphi_\beta$ to be necessary over $V_\kappa^{\P_\alpha}$, if possible. Otherwise, do trivial forcing at that stage. At limit stages, force with $\ell(\alpha)$, provided that this is a $\P_\alpha$-name for a $ccc$ forcing. Otherwise, do trivial forcing at that stage.

Unboundedly often, the forcing will not be trivial in this iteration. Forcing to make $\c$ arbitrarily large below $\kappa$ will happen periodically at successor stages. At limits, the Laver function will choose Cohen forcing unboundedly often. 

Let $G \subseteq \P$ be generic. We have that $\kappa=\c^{V[G]}$, which follows from the above (and also since $\P$ is a finite support iteration of $ccc$ forcings, a Cohen real will be added in the forcing up to limit stages of countable cofinality). Since there are $\kappa$-many such stages, $\kappa$-many Cohen reals are added. 

\begin{claimno} $\bfMPccc$ holds in $V[G]$. \end{claimno}
\begin{proof}[Pf.]
Let $\varphi(\vec a)$ be $ccc$-forceably $ccc$-necessary over $V[G]$ with $\vec a \in H_\c^{V[G]} = H_\kappa^{V[G]}$. Since $\P$ is $ccc$, $\vec a$ has a $\P_\alpha$-name $\dot{\vec a}$, where $\beta+1=\alpha<\kappa$ is some successor ordinal and $\varphi(\dot{\vec a})=\varphi_\beta(\dot{\vec a})$. Since $\varphi(\vec a)$ is $ccc$-forceably $ccc$-necessary in $V[G]$ it must have also been $ccc$-forceably $ccc$-necessary in such a $V[G_\alpha]$ (by adding the rest of the iteration to the beginning of whatever forcing notion makes the sentence $ccc$-necessary in $V[G]$). Since $V_\kappa[G_\alpha]\prec V[G_\alpha]$, the sentence must have been $ccc$-forceably $ccc$-necessary in $V_\kappa[G_\alpha]$. Thus $\varphi(\vec a)$ was forced to be $ccc$-necessary; and so it is $ccc$-necessary in $V[G_{\alpha+1}]$. Since the rest of the iteration is $ccc$, it is true in $V[G]$.
\end{proof}

\begin{claimno} $\bfRAccc$ holds in $V[G]$.\end{claimno}
\begin{proof}[Pf.]
Suppose that $A\subseteq \kappa$ and $\Q$ is a $ccc$ forcing in $V[G]$. Let $\dot A$ and $\dot \Q$ be $\P$-names for $A$ and $\Q$ respectively. Let $\theta$ be an ordinal. Since $\ell$ is a strongly uplifting Laver function for $\kappa$, there is an extension $\langle V_\kappa, \in, \dot A, \P, \vec{\varphi}, \ell \rangle \prec \langle V_\gamma, \in, \dot A^*, \P^*, \vec{\varphi}^*, \ell^*\rangle$ with $\gamma \geq \theta$ inaccessible and $\ell^*(\kappa)=\dot \Q$. Note that $\P$ is definable from $\vec{\varphi}$ and $\ell$. Thus $\P^*$ is the corresponding $\gamma$-iteration defined from $\ell^*$ and $\vec{\varphi}^*$. Furthermore $\P^*=\P*\dot \Q *\dot{\Ptail^*}$, where $\dot{\Ptail^*}$ is the rest of the iteration after stage $\kappa$ up to $\gamma$, which is $ccc$ in $V[G][H]$ ($H\subseteq \Q$ generic over $V[G]$) since it is a finite support iteration of $ccc$ forcing. Let $G^*\subseteq \P^*$ be generic over $V$, containing $G*H$. By Fact \ref{fact:liftinglemma}, we may lift the elementary extension to $\langle V_\kappa[G], \in, \dot A, \P, \vec{\varphi}, \ell, G \rangle \prec \langle V_\gamma[G^*], \in, \dot A^*, \P^*, \vec{\varphi}^*, \ell^*, G^*\rangle$. As $\gamma = \c^{V[G^*]}$, the desired result follows.
\end{proof}
Therefore $\bfRAccc$ and $\bfMPccc$ hold in an extension of $L$ as desired.
\end{proof}

\section{Local Maximality} 
\label{sec:LMP}
The local maximality principle is a natural axiom which elucidates somewhat how the resurrection axiom and the maximality principle intersect. It is one kind of intermediate step between the maximality principle and bounded forcing axiom.


In the local version of the maximality principle, the truth of a forceably necessary sentence will be checked not in $V$ but in a much smaller structure. This should be compared to one of the equivalent ways of defining the bounded forcing axiom, namely generic absoluteness. 

For the appropriate definition of the bounded forcing axiom $\BFA{\kappa}{\Gamma}$ refer to \cite[Definition 2]{Bagaria:2000fj}.

\begin{definition} \label{def:GenericAbsoluteness} Let $n$ be a natural number, $\Gamma$ be a class of forcing notions, and $M$ be a transitive set (usually either $\omega_1$ or $H_{\omega_1}$). Then \emph{$\Gamma$-generic $\Sigma_n(M)$-absoluteness} with parameters in $S \subseteq \mathcal P(M)$ is the statement that for any $\Sigma_n$-sentence $\varphi(\vec a)$ where $\vec a \in S \cap M$ and predicate symbols $\vec{\dot A}$, the following holds: Whenever $\vec A \in S \cap \mathcal P(M)$, $\P \in \Gamma$, and $G$ is $\P$-generic over $V$, then $$(\langle M, \in, \vec A \rangle \models \varphi(\vec a))^V \iff (\langle M, \in, \vec A \rangle \models \varphi(\vec a) )^{V[G]},$$
where $\vec{\dot A}$ is meant to be interpreted in $M$ as $\vec A$, and the satisfaction is first order.
\end{definition}

\begin{fact}[\cite{Bagaria:2000fj}] \label{Fact:BFA} Let $\kappa$ be an infinite cardinal of uncountable cofinality and let $\Gamma$ be a class of forcing notions. Then the following are equivalent:
\begin{enumerate} 
	\item $\BFA{\kappa}{\Gamma}$ 
	\item For every $\P \in \Gamma$ and generic $G \subseteq \P$,  $H_{\kappa^+} \prec_{\Sigma_1} H_{\kappa^+}^{V[G]}.$ 
	\item \label{item:GammaGenericSigma1Absoluteness} $\Gamma$-generic $\Sigma_1(H_{\kappa^+})$-absoluteness.\end{enumerate}	
	\end{fact}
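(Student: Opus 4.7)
The plan is to prove the chain $(1) \Rightarrow (3) \Rightarrow (2) \Rightarrow (1)$, taking advantage of the fact that (2) is essentially the parameter-free version of (3).

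First I would dispatch $(3) \Rightarrow (2)$, which is immediate: $\Sigma_1(H_{\kappa^+})$-absoluteness with predicate symbols trivially specializes, by taking no predicates, to $\Sigma_1$-elementarity $H_{\kappa^+} \prec_{\Sigma_1} H_{\kappa^+}^{V[G]}$. For the converse $(2) \Rightarrow (3)$, given any predicates $\vec A \subseteq H_{\kappa^+}$, one rewrites a $\Sigma_1$-formula $\varphi(\vec a, \vec{\dot A})$ as the $\Sigma_1$-formula $\exists \vec X (\vec X = \vec A \land \varphi(\vec a, \vec X))$ with the $\vec A$ themselves as parameters in $H_{\kappa^+}$; since being equal to a specified set in $H_{\kappa^+}$ is $\Delta_0$ in the parameter, $\Sigma_1$-elementarity suffices.

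Next, for $(3) \Rightarrow (1)$, I would argue by direct absoluteness: given $\P \in \Gamma$ and a family $\mathcal D = \set{A_\alpha}{\alpha < \kappa}$ of maximal antichains in $\P$, each of size $\leq \kappa$, observe that $\P, \mathcal D \in H_{\kappa^+}$. The assertion ``there exists a filter $F \subseteq \P$ with $F \cap A_\alpha \neq \emptyset$ for every $\alpha < \kappa$'' is $\Sigma_1$ in the structure $\langle H_{\kappa^+}, \in, \P, \mathcal D \rangle$, since the witnessing filter is itself a subset of $\P$ and the quantifier over $\alpha$ is bounded by the parameter $\mathcal D$. In $V[G]$ the generic filter, or rather a suitable filter obtainable from it inside $H_{\kappa^+}^{V[G]}$, witnesses this existential, so by $\Sigma_1$-absoluteness such a filter exists in $V$, yielding $\BFA{\kappa}{\Gamma}$.

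The main obstacle is the remaining implication $(1) \Rightarrow (3)$ (equivalently $\Rightarrow (2)$), which is the direction where $\BFA$ is actually used. Suppose $\varphi(\vec a) \equiv \exists x\, \psi(x, \vec a)$ with $\psi$ a $\Delta_0$-formula and $\vec a \in H_{\kappa^+}$, and suppose that in some $V[G]$ with $G \subseteq \P \in \Gamma$ generic we have $H_{\kappa^+}^{V[G]} \models \varphi(\vec a)$. Upward absoluteness from $V$ to $V[G]$ is automatic, so the nontrivial direction is downward: if a witness exists in $V[G]$ then one must exist in $V$. The strategy is to unravel the forcing relation: there is a condition $p\in G$ and a $\P$-name $\dot x$ (which by the reflection in $H_{\kappa^+}$ may be taken of hereditary size $\leq \kappa$) with $p \Vdash \psi(\dot x, \check{\vec a})$. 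For each $\alpha < \kappa$ one reads off a maximal antichain from the forcing relation deciding the relevant atomic sub-formulas of $\psi(\dot x, \check{\vec a})$; since $\dot x$ and $\vec a$ are hereditarily of size $\leq \kappa$, there are at most $\kappa$ such antichains and each has size at most $\kappa$. Applying $\BFA{\kappa}{\Gamma}$ produces a $V$-filter $F$ meeting all of them (and containing $p$), and the interpretation $\dot x^F$ computed inside $V$ is then a witness to $\psi(\cdot, \vec a)$ in $V$, establishing $H_{\kappa^+} \models \varphi(\vec a)$. The trickiest bookkeeping is verifying that the antichains extracted from the forcing relation genuinely fall under the $\kappa$-many, size-$\kappa$ budget of $\BFA$, and that the resulting $F$-interpretation of $\dot x$ is well-defined and lies in $H_{\kappa^+}$ --- both of which follow, after some care, from the hereditary size bound on $\dot x$.
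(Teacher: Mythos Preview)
The paper does not supply its own proof of this statement; it is recorded as a cited fact from Bagaria, so there is no in-paper argument to compare against. Your outline is essentially the standard Bagaria argument, but two steps deserve correction.

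In your reduction $(2)\Rightarrow(3)$ you treat the predicates $\vec A$ as parameters lying in $H_{\kappa^+}$ and rewrite the formula as $\exists\vec X\,(\vec X=\vec A\land\varphi(\vec a,\vec X))$. But by Definition~\ref{def:GenericAbsoluteness} the predicates range over $\mathcal P(H_{\kappa^+})$, and a subset of $H_{\kappa^+}$ of size $\geq\kappa^+$ is not an element of $H_{\kappa^+}$, so the rewritten formula is not a $\Sigma_1$-formula with parameters from $H_{\kappa^+}$. This breaks your route $(2)\Rightarrow(3)\Rightarrow(1)$ for closing the cycle.

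In the direction toward $\BFA{\kappa}{\Gamma}$ you assert that $\P,\mathcal D\in H_{\kappa^+}$. The bounded forcing axiom imposes no size bound on $\P$, only on the antichains, so this fails in general (and then neither $\P$ nor the generic $G$ lives in $H_{\kappa^+}^{V[G]}$). The standard repair is to pass to a $\kappa$-sized complete subalgebra of $\BA(\P)$ containing $\bigcup\mathcal D$; this small algebra does lie in $H_{\kappa^+}$, the $\Sigma_1$ statement about the existence of a sufficiently generic filter can be formulated there, and a filter on the subalgebra induces one on $\P$ meeting each $A_\alpha$. With this fix, the argument runs directly from (2), so the flawed $(2)\Rightarrow(3)$ step is not needed for the cycle. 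Your sketch of $(1)\Rightarrow(2)$ via interpreting a small name along a BFA-filter is the correct idea and matches Bagaria's proof.
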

	
The bounded forcing axiom $\BFA{\kappa}{\Gamma}$ where $\kappa =2^\omega$ and $\Gamma$ is the class of $ccc$ forcing notions is equivalent to $\MA_\kappa$. We look at $\kappa=\omega_1$. The bounded forcing axiom for the class of countably closed forcing is a theorem of $\ZFC$. 
We write $\BSCFA$ for when $\Gamma=\set{ \P }{ \P \text{ is subcomplete} }$, and $\BPFA$ for proper forcing.
The consistency strength of each of these bounded forcing axioms is exactly that of a reflecting cardinal (as is shown by Goldstern and Shelah \cite{Goldstern:1995zr} for proper forcing and the subcomplete version is shown by Fuchs \cite{Fuchs:2016yo}). 

In the following, we write $\varphi^M(\vec a)$ for the sentence $M \models \varphi(\vec a)$.

\begin{definition} Let $\Gamma$ be a  class of forcing notions, and let $S$ be a set of parameters. Let $M$ be a defined term for a structure to be reinterpreted in forcing extensions, and $S \subseteq M$. The \emph{Local Maximality Principle} relative to $M$ ($\MP^{M}_{\Gamma} (S)$) is the statement that for every parameter set $\vec a \in S$ and every sentence $\varphi(\vec a)$, if $\varphi^{M}(\vec a)$ is $\Gamma$-forceably $\Gamma$-necessary, then $\varphi^{M}(\vec a)$ is true. 
\end{definition}

Write $\bflMP$ for the local version of $\bfMP$, and $\bflMP_\Gamma$ for the local version of $\bfMP_\Gamma$ with forcing classes $\Gamma$. As before and as discussed in Lemmas \ref{lemma:MPscparams} and \ref{lemma:MPcccparams}, the choice of $H_{\omega_2}$ makes sense for the parameter set for the boldface subcomplete, countably closed, and proper maximality principles, and the choice of $H_\c$ makes sense for the boldface $ccc$ maximality principle. Additionally, the smallest model $M$ that makes sense to use for the local version has to at least contain the parameter set, so $S=M$ is what we will work with here.

Clearly for forcing classes $\Gamma$, $\MP_{\Gamma}(H_\kappa) \implies \lMP{\Gamma}{\kappa}$.

\begin{proposition} \label{prop:RA->LMP} For $\Gamma$ a class of forcing notions, $\RA_\Gamma(H_{\kappa}) \implies \lMP{\Gamma}{\kappa}$. \end{proposition}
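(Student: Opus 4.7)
The plan is to combine the existential closure that resurrection provides with the $\Gamma$-necessity guaranteed by the hypothesis. I will assume $\varphi^{H_\kappa}(\vec a)$ is $\Gamma$-forceably $\Gamma$-necessary and produce a single two-step forcing $\P * \dot\R$ that both makes $\varphi^{H_\kappa}(\vec a)$ true in the extension and yields $H_\kappa^V \prec H_\kappa^{V[g*h]}$; elementarity then pulls the truth of $\varphi^{H_\kappa}(\vec a)$ back to $V$.

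First, using the hypothesis, fix $\P \in \Gamma$ witnessing that $\varphi^{H_\kappa}(\vec a)$ is $\Gamma$-forceably $\Gamma$-necessary; so in $V[g]$, where $g \subseteq \P$ is $V$-generic, the statement $\varphi^{H_\kappa}(\vec a)$ is $\Gamma$-necessary. Next, apply $\RA_\Gamma(H_\kappa)$ to $\P$ to obtain a $\P$-name $\dot\R$ with $\forces_\P \dot\R \in \Gamma$ such that, for any $V$-generic $g * h \subseteq \P * \dot\R$, we have $H_\kappa^V \prec H_\kappa^{V[g*h]}$.

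The key observation is then that in $V[g]$ the forcing $\R := \dot\R^g$ lies in $\Gamma$, so by the $\Gamma$-necessity of $\varphi^{H_\kappa}(\vec a)$ in $V[g]$ we conclude $V[g*h] \models \varphi^{H_\kappa}(\vec a)$, i.e., $H_\kappa^{V[g*h]} \models \varphi(\vec a)$. Since $\vec a \in H_\kappa^V$ and the resurrection embedding is the identity on $H_\kappa^V$, the elementarity $H_\kappa^V \prec H_\kappa^{V[g*h]}$ delivers $H_\kappa^V \models \varphi(\vec a)$, which is exactly $\varphi^{H_\kappa}(\vec a)$ holding in $V$, as required for $\lMP{\Gamma}{\kappa}$.

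There is no real obstacle here; the argument is essentially a two-line chase. The only point to be careful about is that the resurrection axiom is applied to the witnessing poset $\P$ itself rather than to some $\Q$ chosen later: this ensures that in $V[g]$ the $\Gamma$-necessity clause can be invoked with $\R$ as the "further" forcing, so that $\varphi^{H_\kappa}(\vec a)$ is guaranteed to hold in the very extension where the elementarity from resurrection is available. Everything else is bookkeeping about closure of $\Gamma$ under two-step iterations, which is part of the standing hypothesis on $\Gamma$.
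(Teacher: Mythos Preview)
Your proof is correct and follows essentially the same approach as the paper's own argument: fix $\P\in\Gamma$ witnessing the $\Gamma$-forceable $\Gamma$-necessity, apply $\RA_\Gamma(H_\kappa)$ to obtain $\dot\R$ with $H_\kappa^V\prec H_\kappa^{V[g*h]}$, note that $\R\in\Gamma^{V[g]}$ so $\varphi^{H_\kappa}(\vec a)$ holds in $V[g*h]$, and pull this back by elementarity. If anything, your version is slightly more careful in making explicit that it is the membership $\R\in\Gamma$ in $V[g]$ that lets the $\Gamma$-necessity clause fire.
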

\begin{proof}
Suppose that $\RA_\Gamma(H_{\kappa})$ holds. To see that the local maximality principle holds, suppose that $\varphi(\vec a)$ is a sentence such that the sentence ``$H_{\kappa} \models \varphi(\vec a)$" is $\Gamma$-forceably $\Gamma$-necessary. So there is a forcing notion $\P \in \Gamma$ such that after any further forcing, we have that ``$H_{\kappa} \models \varphi(\vec a)$" holds in the two-step extension. By resurrection, there is a further $\dot \R$ such that $\forces_\P ``\text{$\dot \R \in \Gamma$}"$ and letting $G*h \subseteq \P *\dot \R$ be generic we have $H_{\kappa} \prec H_{\kappa}^{V[G*h]}$. Since ``$H_{\kappa} \models \varphi(\vec a)$" in the two-step extension $V[G][h]$ by our assumption, this means that $H_{\kappa} \models \varphi(\vec a)$ holds by elementarity, so $\lMP{\Gamma}{\kappa}$ holds as desired.
\end{proof}

\begin{proposition} For $\Gamma$ a class of forcing notions, $\lMP{\Gamma}{\kappa^+} \implies \BFA{\kappa}{\Gamma}$. \end{proposition}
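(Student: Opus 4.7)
My plan is to reduce $\BFA{\kappa}{\Gamma}$ to the $\Sigma_1$-elementarity characterization $H_{\kappa^+}^V \prec_{\Sigma_1} H_{\kappa^+}^{V[G]}$ provided by Fact~\ref{Fact:BFA}, and then recognize every $\Sigma_1$ fact witnessed in a $\Gamma$-extension as a button that is pushed by the given forcing and stays pushed in every further extension, so that $\lMP{\Gamma}{\kappa^+}$ reflects it back to $V$. The upward direction of the elementarity is automatic from upward absoluteness of $\Sigma_1$ between transitive structures, so I focus on the downward direction: fix $\varphi(\vec a) \equiv \exists y\,\psi(\vec a, y)$ with $\psi \in \Delta_0$ and $\vec a \in H_{\kappa^+}^V$, and suppose $\P \in \Gamma$, $G \subseteq \P$ is generic, and $H_{\kappa^+}^{V[G]} \models \varphi(\vec a)$.

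Then I would pick $p \in G$ forcing ``$H_{\kappa^+} \models \varphi(\vec a)$'' and let $\P'$ be $\P \downarrow p$, which I take still to lie in $\Gamma$ by the standard closure of definable forcing classes under restriction below conditions. Every condition of $\P'$ forces ``$H_{\kappa^+} \models \varphi(\vec a)$'', so the sentence is $\Gamma$-forceable. To upgrade ``forceable'' to ``forceably necessary'', let $G' \subseteq \P'$ be generic and let $y \in H_{\kappa^+}^{V[G']}$ be a witness to $\psi(\vec a, y)$. Because $\psi$ is $\Delta_0$, $\psi(\vec a, y)$ holds in $V[G']$ and therefore continues to hold in any further extension $V[G'][h]$; and since the hereditary cardinality of $y$ cannot grow under forcing, $y \in H_{\kappa^+}^{V[G'][h]}$, whence $y$ still witnesses $H_{\kappa^+}^{V[G'][h]} \models \varphi(\vec a)$. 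Thus ``$H_{\kappa^+} \models \varphi(\vec a)$'' is $\Gamma$-necessary in $V[G']$, making it $\Gamma$-forceably $\Gamma$-necessary in $V$.

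With $\vec a \in H_{\kappa^+}$ a legitimate parameter, $\lMP{\Gamma}{\kappa^+}$ then yields $H_{\kappa^+}^V \models \varphi(\vec a)$, which establishes the downward $\Sigma_1$-absoluteness and hence $\BFA{\kappa}{\Gamma}$. The only delicate point is the persistence of the witness: it rests on the routine observation that $|\mathrm{TC}(\{y\})|$ cannot increase in a forcing extension, together with absoluteness of $\Delta_0$ formulas between transitive $\in$-structures. The minor caveat that $\P \downarrow p$ remains in $\Gamma$ is a standard closure assumption on forcing classes, so I expect no substantive obstacle in the argument.
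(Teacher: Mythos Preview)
Your proof is correct and follows essentially the same route as the paper's: reduce $\BFA{\kappa}{\Gamma}$ to $\Sigma_1$-absoluteness of $H_{\kappa^+}$, observe that a $\Sigma_1$ statement forced by some $\P\in\Gamma$ is $\Gamma$-necessary once true (you track an explicit $\Delta_0$ witness and its hereditary size, while the paper invokes L\'evy reflection $H_{\kappa^+}\prec_{\Sigma_1} V$ in each extension), and then apply $\lMP{\Gamma}{\kappa^+}$. The paper also starts directly from the hypothesis $\forces_\P\varphi(\check{\vec a})$ rather than restricting below a condition, but this amounts to the same closure assumption on $\Gamma$ that you explicitly flag.
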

\begin{proof}
Assume that $\lMP{\Gamma}{\kappa^+}$ holds. We use characterization \ref{item:GammaGenericSigma1Absoluteness} of $\BFA{\kappa}{\Gamma}$ from Fact \ref{Fact:BFA}. To show that $\Gamma$-generic $\Sigma_1(H_{\kappa^+})$-absoluteness holds, let $\varphi(\vec x)$ be a $\Sigma_1$-formula, $\vec a \in H_{\kappa^+}$, and $\P \in \Gamma$, satisfying $\forces_\P \varphi(\check{\vec{a}})$. Let $G \subseteq \P$ be generic. 
Since $\varphi(\vec x)$ is $\Sigma_1$ and $H_{\kappa^+} \prec_{\Sigma_1} V$ as $\kappa^+$ is regular, we have that $\varphi^{H_{\kappa^+}}(\vec a)= \varphi^{H_{\kappa^+}}((\check{\vec{a}})^G)$ holds in all future forcing extensions.
Thus $\varphi^{H_{\kappa^+}}(\vec a)$ is $\Gamma$-forceably $\Gamma$-necessary, which means that $\varphi^{H_{\kappa^+}}(\vec a)$ is true (in $V$) by the local maximality principle. Thus $\varphi(\vec a)$ holds in $V$ as desired.
\end{proof}

\begin{proposition} \label{proposition:localproperties} $\bflMPsc$ (and $\bflMPc$) implies: \begin{enumerate}
	\item \label{item:Suslin} There is a Suslin tree.
	\item \label{item:diamond} $\lozenge$ holds.
	\item \label{item:CH} $\CH$ holds.
\end{enumerate}\end{proposition}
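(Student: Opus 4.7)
My plan is to verify each of items (1)--(3) by producing, for each, a sentence $\varphi(\vec a)$ such that $\varphi^{H_{\omega_2}}(\vec a)$ is $\Gamma$-forceably $\Gamma$-necessary, and then invoking $\bflMPsc$ (respectively $\bflMPc$), for $\Gamma$ the class of subcomplete (respectively countably closed) forcing notions. I will handle $\CH$ directly, then $\lozenge$ separately, and deduce the existence of a Suslin tree from $\lozenge$ via Jensen's classical construction.

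For (3), let $\varphi$ assert the existence of a bijection $\omega_1 \to \mathcal{P}(\omega)$, so that $\varphi^{H_{\omega_2}}$ is equivalent to $\CH$. The countably closed collapse $\Coll(\omega_1, 2^\omega)$ is in particular subcomplete, and it forces $\CH$. Since neither subcomplete nor countably closed forcings add new reals, $\CH$ persists in every further $\Gamma$-extension, so $\varphi^{H_{\omega_2}}$ is $\Gamma$-forceably $\Gamma$-necessary, and applying $\bflMPsc$ (respectively $\bflMPc$) yields $\CH$. For (2), let $\varphi$ assert that there is a $\lozenge$-sequence on $\omega_1$; this is naturally expressible in $H_{\omega_2}$ since both candidate sequences and all the subsets of $\omega_1$ to be anticipated reside inside $H_{\omega_2}$. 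The canonical countably closed poset whose conditions are countable initial segments of a $\lozenge$-sequence (ordered by extension) is subcomplete, and forces $\lozenge$ via the generic sequence. The crux of necessity is the preservation fact that $\lozenge$ is preserved by subcomplete (and in particular by countably closed) forcing; this is established by Jensen \cite{Jensen:2014} and discussed in \cite{Minden:2017fr}. Applying the appropriate local maximality principle then yields $\lozenge$, from which (1) follows immediately.

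The main obstacle is the preservation of $\lozenge$ under subcomplete forcing in step (2). Subcomplete forcing may add new subsets of $\omega_1$ even though it adds no reals, so one must argue that the ground-model $\lozenge$-sequence continues to anticipate every such new subset along a stationary set. Jensen's argument combines the countable elementary substructure and liftup characterization of subcompleteness with the no-new-reals property; once this preservation is granted, the rest reduces to a routine invocation of the local maximality principle on each of the three $H_{\omega_2}$-sentences.
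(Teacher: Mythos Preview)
Your argument is correct, and for item (2) it matches the paper's proof exactly (force $\lozenge$ with the countably closed poset, then cite Jensen's preservation of $\lozenge$ under subcomplete forcing). The organization of the other two items, however, is inverted relative to the paper: you prove $\CH$ directly via $\Coll(\omega_1,2^\omega)$ and the no-new-reals property, and then deduce the existence of a Suslin tree from $\lozenge$ via Jensen's classical construction; the paper instead proves the Suslin tree item directly---forcing to add a Suslin tree is countably closed (hence subcomplete), and Jensen showed that any fixed Suslin tree remains Suslin after subcomplete forcing---and only afterward deduces $\CH$ as an immediate corollary of $\lozenge$. Your route for $\CH$ is arguably more elementary, needing only that $\omega_1$ and $\mathcal P(\omega)$ are preserved rather than the full $\lozenge$-preservation result; on the other hand, the paper's direct argument for item (1) highlights the independent preservation fact about Suslin trees under subcomplete forcing, which is of interest in its own right and does not rely on $\lozenge$ holding.
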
 
\begin{proof}
Firstly, $H_{\omega_2}$ is enough to verify each of these properties.

For \ref{item:Suslin}, note that the forcing to add a Suslin tree is countably closed and thus is subcomplete as well. But any particular Suslin tree will continue to be a Suslin tree after any subcomplete (or countably closed) forcing \cite[Ch.~3 p.~10]{Jensen:2009wc}. Thus the existence of a Suslin tree is $sc$-forceably $sc$-necessary, and hence true by $\bflMPsc$ (and likewise for countably closed forcing). 

For \ref{item:diamond} note that Jensen \cite[Ch.~3 p.~7]{Jensen:2009wc} shows that $\lozenge$ will hold after performing subcomplete forcing if it held in the ground model. Since forcing to add a $\lozenge$-sequence is countably closed, and $\lozenge$ will continue to hold after any subcomplete (or countably closed) forcing, so must be true by the relevant boldface maximality. Of course then \ref{item:CH} follows, since $\lozenge$ implies $\CH$.
\end{proof}

\begin{proposition} $\bflMPp$ (and $\bflMPccc$) implies: \begin{enumerate}
	\item There are no Suslin trees.
	\item All Aronszajn trees are special.
\end{enumerate}
\end{proposition}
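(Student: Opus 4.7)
The plan is to mirror the structure of Proposition \ref{proposition:localproperties} exactly, but with the roles of ``add'' and ``destroy'' reversed: instead of pointing to a $ccc$/proper poset that \emph{creates} a combinatorial object and then noting the object is preserved, I will point, for each candidate $T \in H_{\omega_2}$, to a $ccc$/proper poset that \emph{destroys} the offending property of $T$ in a way that persists under all further forcing. Since both properties in question are first-order expressible over $H_{\omega_2}$ using $T$ as a parameter, and since the treatment of forcing with parameters in $H_{\omega_2}$ is exactly what $\bflMPp$ and $\bflMPccc$ are designed for, the application is direct.

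For (1), suppose toward contradiction that $T$ is a Suslin tree in $V$; then $T\in H_{\omega_2}$. Consider the forcing $T$ itself (conditions are the nodes of $T$, ordered by reverse tree order). This is $ccc$ precisely because $T$ is Suslin, hence proper. The generic is a cofinal branch through $T$, so in the extension $T$ has an uncountable chain, which means $H_{\omega_2}$ of the extension models that $T$ is not a Suslin tree. Crucially, any subsequent forcing preserves this branch, so $T$ continues to fail to be Suslin in every further extension. Thus the sentence ``$H_{\omega_2}\models T$ is not a Suslin tree'' is $ccc$-forceably $ccc$-necessary (and hence proper-forceably proper-necessary), so $\bflMPccc$ (and $\bflMPp$) applied with parameter $T$ yields that $T$ is not a Suslin tree in $V$, a contradiction.

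For (2), let $T$ be an Aronszajn tree in $V$, so again $T\in H_{\omega_2}$. By a theorem of Baumgartner, there is a $ccc$ (hence proper) forcing notion that adds a specialization function $f:T\to\omega$, i.e., a function that is injective on each chain of $T$. Once such an $f$ exists in the universe, any further forcing leaves the underlying set $T$, its tree order, and $f$ itself unchanged, so $f$ continues to witness ``$T$ is special'' forever after. Hence ``$H_{\omega_2}\models T$ is special'' is $ccc$-forceably $ccc$-necessary, and $\bflMPccc$ (and $\bflMPp$) applied with parameter $T$ gives that $T$ is special in $V$.

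The main obstacle to watch for is the persistence check -- in both cases the property we force to hold must be $\Gamma$-necessary, not merely true in the specific extension. This is immediate from the observation that forcing neither alters the tree order of a fixed $T\in V$ nor destroys a function in $V$; once a cofinal branch or specializing function has been added to the universe it remains, so both ``$T$ has an uncountable chain'' and ``$T$ admits a specializing function into $\omega$'' are upward absolute among forcing extensions. Modulo this standard observation, the argument follows the template of Proposition \ref{proposition:localproperties} line by line.
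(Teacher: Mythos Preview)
Your argument is correct and follows the same route as the paper's proof: for (1) force with the Suslin tree itself to add a branch, and for (2) use the $ccc$ specialization forcing, in each case observing that the witness persists under all further forcing. One small correction for the $ccc$ case: the structure and parameter set for $\bflMPccc$ is $H_\c$, not $H_{\omega_2}$, so you should first note that $\bflMPccc$ forces $\c\geq\omega_2$ (the sentence ``there is an uncountable ordinal'' in $H_\c$ is $ccc$-forceably $ccc$-necessary), whence $T\in H_\c$ and the relevant properties of $T$ are expressible there.
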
 
\begin{proof}
Forcing with a Suslin tree is proper (indeed, $ccc$), and adds a branch through the tree making it fail to be Suslin. Additionally specializing Aronszajn trees is proper, in fact $ccc$, and kills Suslin trees, but also once a specializing function is added it can't be removed by further proper (or $ccc$) forcing.
\end{proof}
%

\subsection{Consistency of the Local Maximality Principle}
\label{subsec:ConlocalMP}
We will now introduce the large cardinal property that is equiconsistent with the local maximality principle. When showing the consistency of the resurrection axiom in section \ref{sec:RA}, we defined the notion of an \textit{uplifting cardinal}, of which the following property is the suitable ``local" version.

\begin{definition} An inaccessible cardinal $\delta$ is \emph{locally uplifting} so long as for every $\theta$ we have that $\delta$ is $\theta$-locally uplifting, meaning that for every formula $\varphi(x)$ and $a \in V_\delta$, there is an inaccessible $\gamma > \theta$ such that 
	$V_\delta \models \varphi(a) \iff V_\gamma \models \varphi(a).$ \end{definition}

Clearly if $\kappa$ uplifting or fully reflecting, then $\kappa$ is locally uplifting. 

Note that if a regular cardinal $\delta$ has the property of being locally uplifting, without necessarily being inaccessible, then $\delta$ must be inaccessible, since otherwise if $2^\alpha \geq \delta$ for some $\alpha<\delta$, this is seen by some larger $V_\gamma$, i.e., $V_\gamma \models \exists \beta \ \left[ 2^\alpha = \beta \right]$. So by elementarity there is some $\beta' = 2^\alpha$ in $V_\delta$, a contradiction.

It is not hard to see that if $\delta$ is locally uplifting, then it is locally uplifting in $L$: for any formula $\varphi(x)$ and $a \in L_\delta$, we would have that $V_\delta \models \varphi^L(a) \iff V_\gamma \models \varphi^L(a)$, which implies that $L_\delta \models \varphi(a) \iff L_\gamma \models \varphi(a)$.

We have the following relationship between locally uplifting and reflecting cardinals.

\begin{definition}
	We say that a cardinal $\kappa$ is \emph{reflecting} so long as for any regular cardinal $\theta$ and for any formula $\varphi(x)$, if $\vec a \in H_\kappa$ and $H_\theta \models \varphi(\vec a)$, then there is $\delta<\kappa$ satisfying $H_\delta \models \varphi(\vec a)$.
\end{definition}

\begin{proposition} If $\kappa$ is locally uplifting then $\kappa$ is reflecting. \end{proposition}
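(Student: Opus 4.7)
The plan is to reduce reflection to a single application of local upliftingness, using a carefully chosen auxiliary formula that internalizes the very thing we want to reflect. Let $\theta$ be a regular cardinal and $\vec a \in H_\kappa$ with $H_\theta \models \varphi(\vec a)$. Since $\kappa$ is inaccessible, $H_\kappa = V_\kappa$, so $\vec a \in V_\kappa$. If $\theta < \kappa$ then $\delta = \theta$ witnesses reflection and we are done, so the interesting case is $\theta \geq \kappa$.

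Consider the auxiliary formula
\[ \psi(x) \; \equiv \; \exists \mu\,(\text{$\mu$ is a regular cardinal} \wedge H_\mu \models \varphi(x)). \]
By local upliftingness applied to the parameter $\vec a$ (and the threshold $\theta$), there exists an inaccessible $\gamma > \theta$ with $V_\kappa \models \psi(\vec a) \iff V_\gamma \models \psi(\vec a)$. Because $\gamma$ is inaccessible and $\gamma > \theta$, the set $H_\theta$ is an element of $V_\gamma$, regularity of $\theta$ is computed correctly there, and the set-sized satisfaction predicate $H_\theta \models \varphi(\vec a)$ is absolute to $V_\gamma$. Thus $\theta$ itself witnesses $V_\gamma \models \psi(\vec a)$, and the biconditional then yields $V_\kappa \models \psi(\vec a)$.

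Hence $V_\kappa$ sees a regular cardinal $\mu$ with $H_\mu \models \varphi(\vec a)$, and necessarily $\mu < \kappa$. Since $\kappa$ is inaccessible, $H_\mu^{V_\kappa} = H_\mu^V$, so $H_\mu \models \varphi(\vec a)$ holds in $V$ as well, and $\delta = \mu$ finishes the proof. The only substantive ingredient beyond the definition is the absoluteness of $H_\mu$, $H_\theta$, cardinal arithmetic, and the satisfaction predicate between $V$, $V_\kappa$, and $V_\gamma$, which is standard at inaccessible cutoffs; the conceptual step is the choice of $\psi$, which converts the existence of a witness $\theta$ large in $V$ into the existence of a witness $\mu$ small in $V_\kappa$.
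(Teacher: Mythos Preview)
Your proof is correct and follows essentially the same approach as the paper: both define an auxiliary formula asserting the existence of some $\delta$ with $H_\delta \models \varphi(\vec a)$, verify it holds in a large $V_\gamma$ using $\theta$ as witness, and transfer it down to $V_\kappa$ via local upliftingness. The only cosmetic differences are that you include the (unnecessary) regularity clause for $\mu$ in $\psi$ and spell out the absoluteness of $H_\mu$, $H_\theta$, and satisfaction at inaccessible cutoffs more explicitly than the paper does.
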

\begin{proof} 
Suppose that $\kappa$ is locally uplifting. To show that $\kappa$ is reflecting, let $\varphi(x)$ be a formula and let $\vec a \in H_\kappa$, and assume that there is $\theta > \kappa$ where $H_\theta \models \varphi(\vec a)$. Define $\psi$ as follows:
	$$\psi(\vec a): \ \  \exists \delta \ \left[ H_\delta \models \varphi(\vec a) \right].$$
Then if we take $\gamma>\theta$ satisfying $H_\theta \in H_\gamma$, we have that $H_\gamma \models \psi(\vec a)$. As $\kappa$ is locally uplifting, this implies that $H_\kappa \models \psi(\vec a)$. Thus there is $\delta < \kappa$ such that $H_\delta \models \varphi(\vec a)$ as desired.
\end{proof}

The local maximality principle is equiconsistent with the existence of a locally uplifting cardinal, using the same method as with the proof of the maximality principle but with some care in relativizing to $H_{\omega_2}$.

\begin{theorem}\label{theorem:bflMPinL} If $\bflMP$ holds, then $\aleph_1^V$ is locally uplifting in $L$. \end{theorem}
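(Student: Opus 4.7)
The plan is to follow the template of Hamkins' proof (\cite[Thm.~31]{Hamkins:2003jk}) that $\bfMP$ implies $\aleph_1^V$ is fully reflecting in $L$, weakening the conclusion from full elementarity to the locally uplifting property. The argument splits naturally into establishing inaccessibility of $\aleph_1^V$ in $L$ and then verifying the local uplifting condition.

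First, I would establish that $\aleph_1^V$ is inaccessible in $L$ by the usual button-pushing trick. For each $\alpha<\aleph_1^V$, the parameter $\alpha$ lies in $H_{\omega_1}^V$, and the $H_{\omega_1}$-statement $\psi(\alpha)$ expressing ``$\alpha^{+L}$ is countable'' is forceably necessary: the collapse $\Coll(\omega,\alpha^{+L})$ renders $\psi(\alpha)$ true, and countability is preserved in all further extensions. Hence $\bflMP$ gives $\psi(\alpha)$ in $V$, so $\alpha^{+L}<\aleph_1^V$ for every $\alpha<\aleph_1^V$, making $\aleph_1^V$ a limit of $L$-cardinals. Since $\aleph_1^V$ is regular in $V$ (hence in $L$) and $L\models\GCH$ so limit cardinals are strong limits, $\aleph_1^V$ is inaccessible in $L$.

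Next, I would verify the local uplifting property. Fix $\theta$, a formula $\varphi(x)$, and $a\in L_{\aleph_1^V}\subseteq H_{\omega_1}^V$; without loss of generality, $L_{\aleph_1^V}\models\varphi(a)$, the other case being symmetric by replacing $\varphi$ with $\neg\varphi$. If $\theta<\aleph_1^V$, then $\gamma=\aleph_1^V$ itself witnesses the clause, since $\aleph_1^V$ is $L$-inaccessible by the previous step and trivially $L_{\aleph_1^V}\models\varphi(a)\iff L_\gamma\models\varphi(a)$. For $\theta\geq\aleph_1^V$, I would iterate the button-pushing: for each $\xi<\aleph_1^V$, the $H_{\omega_1}$-statement ``there is an $L$-inaccessible $\gamma$ with $\check\xi<\gamma<\omega_1$ and $L_\gamma\models\varphi(\check a)$'' (with parameters $\xi,a\in H_{\omega_1}^V$) is forceably necessary via the collapse of $\aleph_1^V$, after which $\aleph_1^V$ itself becomes a countable $L$-inaccessible above $\xi$ with $L_{\aleph_1^V}\models\varphi(a)$, and this persists. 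By $\bflMP$, cofinally many $L$-inaccessibles $\gamma<\aleph_1^V$ satisfy $L_\gamma\models\varphi(a)$. A reflection argument internal to $L$, using inaccessibility of $\aleph_1^V$ to transfer the configuration through the forcing extensions where collapses push arbitrary $\theta$ down below the new $\omega_1$, then yields $L$-inaccessibles beyond any prescribed $\theta$ with the desired agreement.

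The main obstacle is precisely this second case $\theta\geq\aleph_1^V$: one must produce $L$-inaccessibles above $\aleph_1^V$ satisfying the agreement with $L_{\aleph_1^V}$ on $\varphi(a)$, but $\theta$ is not in $H_{\omega_1}^V$ and cannot directly be a parameter to $\bflMP$. The trick is to phrase buttons using only parameters from $L_{\aleph_1^V}\subseteq H_{\omega_1}^V$, exploit the absoluteness of $L$ under forcing, and rely on the care in ``relativizing to $H_{\omega_1}$'' alluded to in the preceding remarks, using the inaccessibility and internal reflection of $\aleph_1^V$ in $L$ to lift the cofinally-many-below-$\aleph_1^V$ data up to unboundedly many $L$-inaccessibles satisfying the agreement.
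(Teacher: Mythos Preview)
Your inaccessibility step is fine, and your button ``there is an $L$-inaccessible $\gamma$ with $\xi<\gamma<\omega_1$ and $L_\gamma\models\varphi(a)$'' is indeed forceably necessary for each $\xi<\aleph_1^V$, so $\bflMP$ gives you cofinally many such $\gamma$ below $\kappa=\aleph_1^V$. But this is where your argument stalls: the ``reflection argument internal to $L$'' you invoke is not actually an argument. Inaccessibility of $\kappa$ in $L$ gives you no upward reflection whatsoever; knowing that a first-order property holds cofinally in $L_\kappa$ tells you nothing about ordinals above $\kappa$ unless you already have something like $L_\kappa\prec L$, which is precisely the stronger conclusion of $\bfMP$ that you do not have here. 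Collapsing $\theta$ in a forcing extension does not help either, since $\bflMP$ need not persist to that extension, so you cannot re-run the button argument there.

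The idea you are missing is to use $\bflMP$ contrapositively. Since $H_{\omega_1}^V\models\varphi^L(a)$, the local statement ``$H_{\omega_1}\models\neg\varphi^L(a)$'' cannot be forceably necessary (else $\bflMP$ would make it true in $V$). Unwinding the modal negation, the statement ``$H_{\omega_1}\models\varphi^L(a)\wedge\text{there are unboundedly many $L$-cardinals}$'' is \emph{necessarily forceable}: in every forcing extension there is a further forcing making it hold. Now, given $\theta\geq\kappa$, first collapse $\theta$ to $\omega$; in that extension the statement is still forceable, so pass to a further extension $V[G][H]$ where it holds. Set $\gamma=\omega_1^{V[G][H]}$: then $\gamma>\theta$, $\gamma$ is regular in $L$, $L_\gamma$ has unboundedly many $L$-cardinals (so $\gamma$ is $L$-inaccessible), and $L_\gamma\models\varphi(a)$. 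This is exactly the paper's route, and it is the step your proposal lacks.
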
	
\begin{proof}
Let $\kappa=\aleph_1^V$ and suppose that the local maximality principle holds.

Firstly, $\kappa$ is a limit cardinal in $L$, since for $\gamma < \kappa$, the statement $H_{\omega_1} \models$ ``there is a cardinal in $L$ greater than $\gamma$" is forceably necessary (by taking $\Coll(\omega, \kappa)$) and thus true in $H_{\omega_1}$. So we have that $\kappa$ is inaccessible in $L$.

Assume $L_\kappa \models \varphi(\vec a)$. In other words, $H_{\omega_1}^V \models \varphi^L(\vec a)$. We need to show that there is a larger $\gamma$ such that $L_\gamma \models \varphi(\vec a)$. In order to do this, let's work in $L$ and first see that the following is necessarily forceable:
	\begin{equation} \label{eqn:ForcesHmodelsPhiAndUnbdedlyCardsInL} H_{\omega_1} \models (\varphi^L(\vec a) \land \text{ ``there are unboundedly many cardinals in $L$"}). \end{equation}
This holds since otherwise it is forceably necessary that $H_{\omega_1} \models \neg \varphi^L(\vec a)$, so $H^V_{\omega_1} \models \neg \varphi^L(\vec a)$ holds, a contradiction.

So, given some $\theta > \kappa$, we may force over $L$ to collapse $\theta$ to $\omega$. Then as (\ref{eqn:ForcesHmodelsPhiAndUnbdedlyCardsInL}) is necessarily forceable, there is further forcing to reach a model $V[G][H]$ such that 
	$H_{\omega_1}^{V[G][H]} \models \varphi^L(\vec a).$
Thus in $V[G][H]$, $\varphi^{L_{\omega_1}}(\vec a)$ holds. Since $\omega_1^{V[G][H]} = \gamma>\theta>\kappa$ in this extension now, and furthermore by (\ref{eqn:ForcesHmodelsPhiAndUnbdedlyCardsInL})
	$$L_{\gamma} \models \varphi(\vec a) \land \text{ ``there are unboundedly many cardinals"},$$ 
we now have a suitable $\gamma$ that is inaccessible in $L$ and $L_{\gamma} \models \varphi(\vec a)$ as desired.
\end{proof}

\begin{observation}
The following consistency results hold.
\begin{enumerate}
	\item \label{item:cccL} $\bflMPccc \implies \c^V$ is locally uplifting in $L$.
	\item \label{item:pL} $\bflMPp \implies \aleph_2^V$ is locally uplifting in $L$.
	\item $\bflMPc \implies \aleph_2^V$ is locally uplifting in $L$.
	\item \label{item:scL} $\bflMPsc \implies \aleph_2^V$ is locally uplifting in $L$.
\end{enumerate}
\end{observation}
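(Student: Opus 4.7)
The plan is to follow the template of Theorem~\ref{theorem:bflMPinL} for each case, adjusting the parameter structure and the collapse to live in the relevant forcing class. For the proper, countably closed, and subcomplete cases, take $\kappa = \aleph_2^V$ and replace $H_{\omega_1}$ by $H_{\omega_2}$; the collapse $\Coll(\omega,\theta)$ is replaced by $\Coll(\omega_1,\theta)$, which is countably closed and hence lies in all three classes. For the $ccc$ case, take $\kappa = \c^V$ and replace $H_{\omega_1}$ by $H_\c$; the collapse is replaced by Cohen forcing $\Add(\omega,\theta)$.

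In each case I would first verify $\kappa$ is inaccessible in $L$. The statement ``$H_\tau \models \exists$ an $L$-cardinal above $\gamma$'' (where $\tau$ is $\omega_2$ or $\c$ as appropriate) is $\Gamma$-forceably $\Gamma$-necessary via the chosen forcing, hence true by $\bflMP_\Gamma$, so $\kappa$ is a limit cardinal of $L$. Regularity of $\kappa$ in $L$ is immediate for $\aleph_2^V$; for $\c^V$ in the $ccc$ case, it requires more care, using that ``$H_\c \models \c$ is a regular cardinal of $L$'' is $ccc$-forceably $ccc$-necessary (via Cohen forcing of a regular-in-$L$ length).

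For the main step, assume $L_\kappa \models \varphi(\vec a)$ and fix $\theta > \kappa$. Exactly as in the proof of Theorem~\ref{theorem:bflMPinL}, I would first argue that
\begin{equation*}
H_\tau \models \bigl(\varphi^L(\vec a) \land \text{``there are unboundedly many cardinals in } L\text{''}\bigr)
\end{equation*}
is $\Gamma$-necessarily $\Gamma$-forceable: otherwise its negation is $\Gamma$-forceably $\Gamma$-necessary, yielding $H_\tau^V \models \neg \varphi^L(\vec a)$, a contradiction. Then force in $\Gamma$ to push $\tau$ past $\theta$, and apply the necessary forceability to reach $V[G][H]$ in which the display holds. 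Setting $\gamma := \tau^{V[G][H]} > \theta$, the display witnesses that $\gamma$ is an $L$-limit cardinal; as $\gamma$ is regular in $V[G][H]$, it is regular in $L$, hence $L$-inaccessible, with $L_\gamma \models \varphi(\vec a)$ as required.

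The main obstacle — specific to the $ccc$ case — is that $\c^{V[G][H]}$ need not be regular in $V[G][H]$, since $ccc$ forcing can create a singular continuum. I would address this by strengthening the clause in the display to ``unboundedly many \emph{regular} $L$-cardinals $\gamma'$ with $L_{\gamma'} \models \varphi(\vec a)$ below $\c$'', which remains $ccc$-forceably $ccc$-necessary under the assumption (using Cohen forcing of appropriate length to drive $\c$ past successive regular $L$-cardinals where $\varphi$ reflects) and directly yields a regular, $L$-limit, hence $L$-inaccessible witness $\gamma' \leq \gamma$ above $\theta$ at which $\varphi(\vec a)$ holds.
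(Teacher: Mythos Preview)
Your outline matches the paper's proof: for (\ref{item:pL})--(\ref{item:scL}) the paper says only to rerun Theorem~\ref{theorem:bflMPinL} with the collapse to $\omega_1$ in place of $\omega$ (and $H_{\omega_2}$ in place of $H_{\omega_1}$), and for (\ref{item:cccL}) to blow up $\c$ rather than collapse, deferring the details to Hamkins' argument for $\bfMPccc$. Your treatment of (\ref{item:pL})--(\ref{item:scL}) is correct as written.

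For (\ref{item:cccL}) you go beyond the paper by trying to handle the regularity issue explicitly, but two points in your fix need repair. First, your argument that $\c^V$ is regular in $L$ does not work as stated: after a single Cohen extension of regular-in-$L$ length, further $ccc$ forcing can move $\c$ again to an $L$-singular value, so the displayed sentence is not $ccc$-\emph{necessary} there, and hence not $ccc$-forceably $ccc$-necessary. Second, in your strengthened display you ask only for \emph{regular} $L$-cardinals $\gamma'$ with $L_{\gamma'}\models\varphi(\vec a)$, but a regular $L$-cardinal may be an $L$-successor, so you have not secured $L$-inaccessibility of your witness; moreover, your parenthetical justification (``drive $\c$ past successive regular $L$-cardinals where $\varphi$ reflects'') already presupposes the existence of such cardinals above $\kappa$, which is exactly the conclusion. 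The second point is easily repaired: ask instead for unboundedly many $L$-\emph{inaccessible} $\gamma'$ with $L_{\gamma'}\models\varphi(\vec a)$, and justify necessary forceability by the usual contrapositive together with a preliminary application of $\bflMPccc$ (with parameter $\beta<\kappa$) showing such $\gamma'$ are already unbounded below $\kappa$ --- once $\c$ is pushed past $\kappa$, the $L$-inaccessible $\kappa$ itself becomes a witness visible in $H_\c$, and this persists.
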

\begin{proof}
(\ref{item:pL})-(\ref{item:scL}) hold by collapsing to $\omega_1$ instead of $\omega$ in the proof of \ref{theorem:bflMPinL}. For (\ref{item:cccL}) instead of collapsing $\kappa$ to be as small as desired, blow up the continuum as needed, like in the comparable proof for $\bfMPccc$ in \cite[Thm.~31.2]{Hamkins:2003jk}.
\end{proof}
	
\begin{theorem} \label{theorem:forcingbflMP} If $\delta$ is locally uplifting, then there is a forcing extension in which $\bflMP$ holds and $\delta = \aleph_1$. \end{theorem}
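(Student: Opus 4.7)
The plan is to adapt Hamkins' least-counterexample lottery sum iteration (sketched after Theorem~\ref{thm:MP=fullyreflecting}) to the weaker hypothesis of local upliftingness. Working over $L$ (since $\delta$ remains locally uplifting in $L$ by the discussion preceding the theorem), define $\P = \langle (\P_\alpha, \dot{\Q}_\alpha) : \alpha < \delta \rangle$ as a finite-support iteration of length $\delta$: at stage $\alpha$, $\dot{\Q}_\alpha$ names $\bigoplus \mathcal{M}_\alpha$, the lottery sum of those forcing notions $\dot{\Q}$ of minimal rank in $V_\delta^{\P_\alpha}$ that force some sentence $\varphi^{H_{\omega_1}}(\vec{a})$, with $\vec{a}$ a name in $V_\delta^{\P_\alpha}$, to be necessary while it is not currently true in $V_\delta^{\P_\alpha}$. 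Let $G \subseteq \P$ be generic.

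One first shows $\delta = \aleph_1^{V[G]}$ along the usual lines: the iteration has the $\delta$-chain condition since each stage has size less than $\delta$, and density of the ``$a$ is hereditarily countable'' local button (witnessed by $\Coll(\omega, \TC{a})$) forces every set of size $<\delta$ to be collapsed.

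The heart of the proof is verifying $\bflMP$ in $V[G]$. Suppose for contradiction that $\varphi^{H_{\omega_1}}(\vec{a})$ is forceably necessary in $V[G]$ while $H_{\omega_1}^{V[G]} \not\models \varphi(\vec{a})$; pick $p \in G$ forcing this and a stage $\alpha > \mathrm{supp}(p)$ with $\vec{a}$ having a name in $V_\delta[G_\alpha]$. Pre-composing any witnessing forcing with the tail of $\P$ shows that $\varphi^{H_{\omega_1}}(\vec{a})$ is still forceably necessary in $V[G_\alpha]$. The main obstacle is to transfer this statement down to $V_\delta[G_\alpha]$, so that the lottery at later stages has a chance to select such a witness. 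In the fully reflecting case, $V_\delta \prec V$ does this immediately; here I would instead observe that the assertion ``some poset forces $\Box\, \varphi^{H_{\omega_1}}(\vec{a})$'' is a first-order formula $\psi(\vec{a})$ whose witnesses only concern the small structure $H_{\omega_1}$. Applying local upliftingness to $\psi$ (a property preserved by the small forcing $\P_\alpha$), pick an inaccessible $\gamma$ such that $V_\delta[G_\alpha] \models \psi(\vec{a}) \iff V_\gamma[G_\alpha] \models \psi(\vec{a})$ and so that a witness from $V[G_\alpha]$ already lies in $V_\gamma[G_\alpha]$; absoluteness of $\psi$ between $V_\gamma[G_\alpha]$ and $V[G_\alpha]$ for such large $\gamma$ (essentially because a failure of $\Box\, \varphi^{H_{\omega_1}}$ is witnessed by a bounded-rank counterexample forcing) then transfers a witness into $V_\delta[G_\alpha]$, placing $\varphi^{H_{\omega_1}}(\vec{a})$ into $\mathcal{M}_\alpha$.

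By density the button is pushed at some stage $\beta \geq \alpha$, making $\varphi^{H_{\omega_1}}(\vec{a})$ necessary over $V_\delta[G_{\beta+1}]$; since the truth of $\varphi^{H_{\omega_1}}$ in any extension depends only on the small structure $H_{\omega_1}$ and is computed identically in $V_\delta$ and $V$, it is necessary over $V[G_{\beta+1}]$, and hence true in $V[G]$, contradicting our assumption. The hardest step is the absoluteness transfer in the previous paragraph, combining local upliftingness with the locality of $H_{\omega_1}$-satisfaction to replace the one-line use of $V_\delta \prec V$ in the fully reflecting argument; once that is established, the rest mirrors Hamkins' original ``run through the house'' iteration.
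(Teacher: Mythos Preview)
Your overall strategy matches the paper's: define the least-rank local-button lottery iteration inside $V_\delta$, use the $\delta$-cc to get $\delta=\aleph_1$, and argue by contradiction that any unpushed local button would be available at some intermediate stage and hence densely pushed. Your use of local upliftingness to transfer ``$\varphi^{H_{\omega_1}}(\vec a)$ is forceably necessary'' from $V[G_\alpha]$ down to $V_\delta[G_\alpha]$ is exactly what the paper does (choose $\gamma$ large enough to see a witness, then reflect to $V_\delta$).

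There is, however, a genuine gap in your last paragraph. From ``$\varphi^{H_{\omega_1}}(\vec a)$ is necessary over $V_\delta[G_{\beta+1}]$'' you conclude it is necessary over $V[G_{\beta+1}]$, justifying this by saying that truth of $\varphi^{H_{\omega_1}}$ in any extension depends only on $H_{\omega_1}$ and is computed identically in $V_\delta$ and $V$. That observation is correct for a \emph{fixed} extension, but ``necessary'' quantifies over all further forcings, and $V[G_{\beta+1}]$ has many forcings of rank $\geq\delta$ that $V_\delta[G_{\beta+1}]$ does not see. One of those could in principle force $\neg\varphi^{H_{\omega_1}}(\vec a)$ without contradicting necessity inside $V_\delta[G_{\beta+1}]$. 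Locality of $H_{\omega_1}$-satisfaction alone does not close this gap.

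The fix is exactly the argument you already used in the other direction (and which the paper isolates explicitly before the main contradiction argument, establishing that ``$V_\delta$ is correct'' in both directions): if some $\dot\R'\in V[G_{\beta+1}]$ forces $\neg\varphi^{H_{\omega_1}}(\vec a)$, choose $\gamma$ inaccessible above the rank of $\dot\R'$ so that $V_\gamma[G_{\beta+1}]$ sees this counterexample; by local upliftingness applied to the formula ``there exists a forcing $\dot\R$ forcing $\neg\varphi^{H_{\omega_1}}(\vec a)$'', $V_\delta[G_{\beta+1}]$ also sees such a counterexample, contradicting necessity there. So you need a second invocation of local upliftingness at this step, not merely the locality of $H_{\omega_1}$; once that is inserted, your proof is complete and essentially identical to the paper's. (Working over $L$ is harmless but unnecessary; the paper works directly in $V$.)
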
 
\begin{proof}
Let $\delta$ be locally uplifting.
Define the $\delta$-length lottery sum finite support iteration $\P = \P_\delta$ as follows: for $\alpha < \delta$ let 
	$\P_{\alpha+1} = \P_\alpha *\dot{\Q}_\alpha$
where $\dot{\Q}_\alpha$ is a $\P_\alpha$-name for the lottery sum of all minimal rank posets that force some sentence relativized to the $\HC$ of $V_\delta^{\P_\alpha}$ to be necessary. In particular, let $\Phi$ be the collection of sentences $\varphi(\vec a)$ where, in $V_\delta$, $\vec a$ is a $\P_\alpha$-name for an element of $\HC$ such that $V_\delta$ models that $\varphi^\HC(\vec a)$ is forceably necessary.
So $\Phi$ is the set of all possible ``local buttons" available at this point in the iteration. Then we let 
	$$\dot{\Q}_\alpha = \bigoplus_{\varphi \in \Phi} \set{ \dot {\Q} \in V_\delta^{\P_\alpha} }{ \dot \Q \text{ is least rank,  $V_\delta^{\P_\alpha} \models$ ``$\dot{\Q}$ forces `$\varphi(\vec a)^{\HC}$ is necessary.'"} }$$ 
	We shall refer to this definition as the least-rank $\bflMP$ lottery sum iteration of length $\delta$. 
	
Since we will want the full iteration $\P$ to remain relatively small in size and to have the $\delta$-$cc$, notice that here we insist that the parameters for our sentences come from $H_{\omega_1}^{V_\delta^{\P_\alpha}}$. As $\delta$ is inaccessible, it is large enough so that $H_{\omega_1}^V = H_{\omega_1}^{V_\delta}$, and moreover this remains true in each subsequent extension in the iteration so $H_{\omega_1}$ in the subsequent extensions gets interpreted the same in $V_\delta^{\P_\alpha}$ as in $V^{\P_\alpha}$. This is because $\delta$ is locally uplifting: if $\dot \Q' \in V^{\P_\alpha}$ forces that a sentence $\varphi(\vec a)^{H_{\omega_1}}$ is necessary, where $\vec a \in H_{\omega_1}$, then take $\theta$ large enough so that $\dot \Q' \in V^{\P_\alpha}_\theta$. Then as $\delta$ is locally uplifting we have an inaccessible $\gamma > \theta$ such that 
	$$V^{\P_\alpha}_\gamma \models \text{``There is a forcing notion $\dot \Q$ which forces `$\varphi(\vec a)^{H_{\omega_1}}$ is necessary.'"}$$ 
and
	$V^{\P_\alpha}_\delta$ models the above sentence as well.
So since each of the iterands of the forcing $\P$ are taken to be of least rank, they are all in $V_\delta$ anyway. If on the other hand at stage $\alpha+1$ we have that $\dot \Q' \in V_\delta^{\P_\alpha}$ is of least rank forcing that a sentence $\varphi(\vec a)^{H_{\omega_1}}$ is necessary, then the only way it could be wrong is that there is some further forcing $\dot \R'$ that is not in $V_\delta^{\P_\alpha*\dot \Q'}$ that forces the sentence to be false. But then we may take $\gamma$ larger than the verification of this forcing $\dot \R'$, and use the fact that $\delta$ is locally uplifting to see that $$V^{\P_\alpha*\dot \Q'}_\delta \models ``\text{There is a forcing notion $\dot \R$ which forces $\neg \varphi(\vec a)^{H_{\omega_1}}$."}$$ This contradicts the choice of $\dot \Q'$ in $V_\delta$, which means that $V_\delta$ is correct.
Thus the iteration is the same as if it were defined over $V$.


Now suppose that $G \subseteq \P$ is generic over $V$. Let's see that $V[G] \models \bflMP$. Assume toward a contradiction that it fails: namely $\varphi(\vec a)$ is a sentence with $\vec a \in H^{V[G]}_{\omega_1}$ such that in $V[G]$ it is a local button, and also that
$\varphi(\vec a)^{H_{\omega_1}}$ is not true in $V[G]$. Let us also take $p \in G$ forcing this to be the case.

Note that $H_\delta^{V[G]} = H_{\omega_1}^{V[G]}$, since $\delta$ is regular and $\P$ has the $\delta$-cc, so the length of the iteration is collapsed to $\omega_1$.		

Let $\dot{\Q}$ be a name for $\Q$, a least rank poset in $V^{\P}$ and $\dot{\vec a}$ be a name for $\vec a$ such that in $V^\P$, we have that
``$\varphi(\dot{\vec a})^{H_{\omega_1}}$ is necessary."

Since $\P$ has the $\delta$-$cc$, at no stage in the iteration could $\delta$ be collapsed. This means that there is some stage where the parameters $\dot{\vec a}$ appear. Thus we may find a stage in the iteration where the parameters $\vec a$ are available, past the support of $p$, say $\vec a \in V_\delta[G_\alpha]$. 

Now we let $\theta$ satisfy $\P \in V_\theta$ and $\dot \Q \in V_\theta^\P$. Then as $\delta$ is locally uplifting, we have that there is an inaccessible $\gamma >\theta$ satisfying 
	$$V_\gamma[G_\alpha] \models ``\varphi(\vec a)^{H_{\omega_1}} \text{ is forceably necessary."}$$
Namely, $\Ptail * \dot \Q$ makes $\varphi(\vec a)^{H_{\omega_1}}$ necessary.  So by the fact that $\delta$ is uplifting, we have that 
	$$V_\delta[G_\alpha] \models ``\varphi(\vec a)^{H_{\omega_1}} \text{ is forceably necessary."}$$
Moreover, $\varphi(\vec a)^{H_{\omega_1}}$ must continue to be a local button in later stages, since it is a local button in $V[G]$. So it is dense for the local button to be pushed -- $\varphi(\vec a)^{H_{\omega_1}}$ is necessary in $V_\delta[G_\beta]$ for some $\beta<\delta$. Thus $\varphi(\vec a)^{H_{\omega_1}}$ is true in $V[G]$, after the rest of the iteration; a contradiction.
\end{proof}

\begin{theorem} \label{thm:lMPGamma} Let $\delta$ be a locally uplifting cardinal. Then there are forcing extensions in which we have the following:
\begin{enumerate}
	\item\label{item:lproper} $\bflMPp + \; \delta= \c = \aleph_2$. 
	\item\label{item:lctblyclosed} $\bflMPc + \; \delta=\aleph_2 + \; \CH$. 
	\item\label{item:lsubcomplete} $\bflMPsc + \; \delta=\aleph_2+ \; \CH$. 
\end{enumerate}	
\end{theorem}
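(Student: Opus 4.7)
The plan is to follow the template of Theorem \ref{theorem:forcingbflMP}, but with three modifications: restrict the iterands to the relevant class $\Gamma$, change the relevant parameter structure from $H_{\omega_1}$ to $H_{\omega_2}$, and use the appropriate support that preserves the iteration class. Concretely, for each part we define a least-rank $\bflMP_\Gamma$ lottery sum iteration $\P = \P_\delta = \seq{(\P_\alpha, \dot\Q_\alpha)}{\alpha < \delta}$, where at stage $\alpha$ the iterand $\dot\Q_\alpha$ is the lottery sum of all $\P_\alpha$-names of minimal rank in $V_\delta^{\P_\alpha}$ for forcing notions in $\Gamma$ which make some sentence of the form $\varphi(\vec a)^{H_{\omega_2}}$ (with $\vec a$ a $\P_\alpha$-name for an element of $H_{\omega_2}^{V_\delta^{\P_\alpha}}$) necessary. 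For (\ref{item:lproper}) and (\ref{item:lctblyclosed}) take countable support; for (\ref{item:lsubcomplete}) take revised countable support, invoking Jensen's theorem that revised countable support iterations of subcomplete forcings are subcomplete.

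First I would verify the basic structural properties. Since $\delta$ is locally uplifting (hence inaccessible), a book-keeping argument combined with countable/revised-countable support shows $\P$ has the $\delta$-cc and size $\delta$, so cardinals $\geq \delta$ are preserved. Since forcings collapsing sets of size $<\delta$ to $\omega_1$ lie in each class $\Gamma$ considered, density shows $\delta \leq \aleph_2$ in $V[G]$, so $\delta = \aleph_2$ in $V[G]$. For (\ref{item:lproper}), a density argument using Cohen-like proper forcing notions gives $\c = \delta$. For (\ref{item:lctblyclosed}) and (\ref{item:lsubcomplete}), no reals are added, and local maximality together with \ref{proposition:localproperties} yields $\CH$ (or alternatively $\CH$ holds already throughout by the support, and is then forced to persist). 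Just as in Theorem \ref{theorem:forcingbflMP}, local uplifting ensures that computing ``least-rank counterexample'' in $V_\delta^{\P_\alpha}$ agrees with computing it in $V^{\P_\alpha}$, since any witness outside $V_\delta$ would reflect to give a witness inside by the local uplifting property applied at a sufficiently large inaccessible $\gamma > \theta$ containing the putative external witness.

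The core verification is that $\bflMP_\Gamma$ holds in $V[G]$, and here I would mimic the argument of Theorem \ref{theorem:forcingbflMP} essentially verbatim. Suppose a condition $p \in G$ forces that $\varphi(\vec a)^{H_{\omega_2}}$ is $\Gamma$-forceably $\Gamma$-necessary in $V[G]$ but false. Using the $\delta$-cc and the fact that the name $\dot{\vec a}$ is of size $<\delta$, find $\alpha < \delta$ past the support of $p$ so that $\vec a \in V_\delta[G_\alpha]$. Choose $\theta$ bounding both $\P$ and a name for the poset $\dot\Q \in \Gamma$ in $V^{\P_\alpha}$ which, together with $\Ptail$, forces $\varphi(\vec a)^{H_{\omega_2}}$ to be necessary. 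By local upliftingness in $V^{\P_\alpha}$ (which holds, again via the absoluteness of the iteration's definition), there is an inaccessible $\gamma > \theta$ such that $V_\gamma[G_\alpha]$ sees $\varphi(\vec a)^{H_{\omega_2}}$ to be $\Gamma$-forceably $\Gamma$-necessary, hence so does $V_\delta[G_\alpha]$. Thus $\varphi(\vec a)^{H_{\omega_2}}$ is a local $\Gamma$-button from stage $\alpha$ onward (since buttons are preserved under further $\Gamma$-extensions), so by density its lottery option is eventually chosen at some stage $\beta \geq \alpha$. After that stage $\varphi(\vec a)^{H_{\omega_2}}$ is necessary, and since the remainder of $\P$ lies in $\Gamma$ (this is exactly the place where the correct support matters), $\varphi(\vec a)^{H_{\omega_2}}$ holds in $V[G]$, contradicting the choice of $p$.

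I expect the main obstacle to be the two bookkeeping issues: (i) verifying that the iteration stays in $\Gamma$, which for the subcomplete case requires appealing to Jensen's preservation theorem for revised countable support iterations and checking that lottery sums of subcomplete forcings are subcomplete, and (ii) verifying that the parameter structure $H_{\omega_2}$ is computed coherently along the iteration and between $V_\delta^{\P_\alpha}$ and $V^{\P_\alpha}$. The latter is handled exactly as in the proof of Theorem \ref{theorem:forcingbflMP}, using local upliftingness to reflect both ``there is a $\Gamma$-forcing which makes $\varphi^{H_{\omega_2}}$ necessary'' and ``there is a $\Gamma$-forcing which refutes it'' down from a large $V_\gamma$ into $V_\delta$. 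Once these are in place, the least-counterexample argument goes through uniformly for all three classes.
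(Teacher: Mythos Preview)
Your proposal is correct and follows essentially the same approach as the paper: define a least-rank $\bflMP_\Gamma$ lottery sum iteration of length $\delta$, replace $H_{\omega_1}$ by $H_{\omega_2}$, relativize to $\Gamma$, and choose the support (countable for proper and countably closed, revised countable for subcomplete) exactly as in Theorem~\ref{thm:RA+MP}. The paper's proof is only a two-sentence sketch pointing back to Theorems~\ref{theorem:forcingbflMP} and~\ref{thm:RA+MP}, and you have simply unpacked that sketch in detail; the only cosmetic wrinkle is that ``local upliftingness in $V^{\P_\alpha}$'' should be read as applying local upliftingness of $\delta$ in $V$ to a formula describing the $\P_\alpha$-extension, which is how the paper phrases it.
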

\begin{proof}
For (\ref{item:lproper})-(\ref{item:lsubcomplete}), follow the same blueprint as (\ref{theorem:forcingbflMP}), by defining a least-rank $\bflMP_\Gamma$ lottery sum iteration of length $\delta$. Use $H_{\omega_2}$ instead of $H_{\omega_1}$ and relativize to the particular forcing class, modifying the support of the iteration as in Theorem \ref{thm:RA+MP} for each forcing class. 
%
\end{proof}

Again more should be said about the case of $ccc$ forcing, since $ccc$ forcing notions are not closed under lottery sums. 

\begin{theorem} Let $\delta$ be a locally uplifting cardinal. Then there is a forcing extension in which $\bflMPccc$ holds and $\delta = \c$.
\end{theorem}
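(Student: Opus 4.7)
The plan is to adapt the proof of Theorem \ref{thm:cccRA+MP} to the purely local-maximality setting, dropping the Laver-function machinery (which was only needed there to handle resurrection) and borrowing the relativization-to-$H_\c$ idea from Theorem \ref{theorem:forcingbflMP}. Since $ccc$ forcings are not closed under lottery sums, the lottery-sum iteration of Theorem \ref{thm:lMPGamma} cannot be used directly; instead one uses a book-keeping enumeration with unbounded repetition, just as in Theorem \ref{thm:cccRA+MP}.

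Work in $V$ with $\delta$ locally uplifting. Fix an enumeration $\vec \varphi = \langle \varphi_\alpha(\dot{\vec a}_\alpha) : \alpha < \delta \rangle$, with unbounded repetition, of all sentences with $\P_\beta$-names (for $\beta < \delta$) for parameters that are forced to lie in $H_\c$. Define a finite-support $\delta$-length $ccc$ iteration $\P = \langle (\P_\alpha, \dot \Q_\alpha) : \alpha < \delta \rangle$: at a successor stage $\alpha+1$, let $\dot\Q_\alpha$ be a $\P_\alpha$-name for a minimal-rank $ccc$ forcing in $V_\delta^{\P_\alpha}$ forcing that ``$\varphi_\alpha(\dot{\vec a}_\alpha)^{H_\c}$ is $ccc$-necessary'' over $V_\delta^{\P_\alpha}$, provided such a forcing exists; otherwise take $\dot\Q_\alpha$ trivial. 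As in Theorem \ref{theorem:forcingbflMP}, local upliftingness ensures that $V_\delta^{\P_\alpha}$ and $V^{\P_\alpha}$ agree about both the existence and the minimal rank of such witnesses, so the iteration is the same as if defined directly over $V$.

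Let $G \subseteq \P$ be generic. Standard arguments yield that $\P$ is $ccc$ with the $\delta$-$cc$, that $\delta$ is preserved, and that (since unboundedly many Cohen reals are added at countable-cofinality limits via finite support) $\delta = \c^{V[G]}$. To verify $\bflMPccc$ in $V[G]$, suppose $\varphi(\vec a)^{H_\c}$ is $ccc$-forceably $ccc$-necessary with $\vec a \in H_\c^{V[G]}$. By the $\delta$-$cc$, $\vec a$ has a $\P_\alpha$-name $\dot{\vec a}$ for some $\alpha < \delta$. Prepending the tail $\Ptail$ to any witness shows $\varphi(\dot{\vec a})^{H_\c}$ is still $ccc$-forceably $ccc$-necessary over $V[G_\alpha]$, and by local upliftingness applied to a sufficiently large $V_\gamma^{\P_\alpha}$ containing such a witness, also over $V_\delta^{\P_\alpha}$. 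Pick by unbounded repetition a $\beta \geq \alpha$ with $\varphi_\beta(\dot{\vec a}_\beta) = \varphi(\dot{\vec a})$; then at stage $\beta$ the book-keeping condition is met, $\dot\Q_\beta$ forces $\varphi(\vec a)^{H_\c}$ to be $ccc$-necessary, and since the remainder of $\P$ is $ccc$, $\varphi(\vec a)^{H_\c}$ holds in $V[G]$.

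The main obstacle is the interaction between local upliftingness and the intermediate models $V_\delta^{\P_\alpha}$: one must check that throughout the iteration $V_\delta^{\P_\alpha}$ correctly identifies the buttons available over $V[G_\alpha]$ and picks their true minimal-rank pushers. This is handled exactly as in Theorem \ref{theorem:forcingbflMP}. Given any external candidate $\dot \Q' \in V^{\P_\alpha}$ witnessing a button, take $\theta$ so that $\dot \Q' \in V_\theta^{\P_\alpha}$ and apply local upliftingness to find an inaccessible $\gamma > \theta$ reflecting down to $V_\delta^{\P_\alpha}$ the existence of a witness. A symmetric argument, bounding any putative counterexample $\dot \R'$ that would show a chosen $\dot \Q$ fails to force necessity, rules out the possibility that $V_\delta^{\P_\alpha}$ has selected the wrong minimal-rank forcing and thus justifies that the iteration as defined in $V_\delta$ agrees with the iteration as defined in $V$.
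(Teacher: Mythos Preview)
Your proposal is correct and follows essentially the same approach as the paper: a finite-support $ccc$ iteration of length $\delta$ driven by a book-keeping enumeration with unbounded repetition (avoiding lottery sums, which fail for $ccc$), with the local-upliftingness of $\delta$ used exactly as in Theorem \ref{theorem:forcingbflMP} to ensure $V_\delta^{\P_\alpha}$ correctly identifies and pushes local buttons. The only cosmetic differences are that you insist on minimal-rank witnesses and organize the stages as successors, neither of which affects the argument.
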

\begin{proof}
	Let $\delta$ be locally uplifting. We need to enumerate all formulas in the language of set theory, together with elements of $H_\delta$, with unbounded repetition. Let $f$ be such a function taking each $\alpha <\delta$ to a formula $\varphi(x)$ together with a parameter set $\vec a \in H_\delta$. We will use this enumeration as a kind of book-keeping function for the iteration. 
	
	Define the $\delta$-length lottery sum finite-support iteration $\P = \P_\delta$ all of whose initial segments will be elements of $V_\delta$. At stage $\alpha$ in the iteration, look at $f(\alpha) = \langle \varphi(x), \vec a \rangle$. Check as to whether in $V_\delta$, $\vec a$ is a $\P_\alpha$-name for an element of $H_\c$ and whether $\varphi(\vec a)$ is a local $ccc$-button; namely whether:
	$$ \text{ $V_\delta^{\P_\alpha} \models $ ``$\varphi^{H_\c}(\vec a)$ is $ccc$-forceably $ccc$-necessary."} $$
	If so, then 
	$\P_{\alpha+1} = \P_\alpha *\dot{\Q}_\alpha$
where $\dot{\Q}_\alpha$ is a $\P_\alpha$-name for a $ccc$ poset that forces $\varphi^{H_\c}(\vec a)$ to be $ccc$-necessary.

We can use the same reasoning as given in the proof of (\ref{theorem:forcingbflMP}) to see that since $\delta$ is locally uplifting, the iteration is the same as if it were defined over $V$, and that in this case $\P$ is $ccc$. 

Let $G \subseteq \P$ be generic over $V$. In $V[G]$ we have that $\delta=\c$ since $\P$ is $ccc$, $\delta = |\P|$ is inaccessible, and $\delta$-many reals are added. To see that $\bflMPccc$ holds in $V[G]$, let $\varphi(\vec a)$ be a local $ccc$-button with $\vec a \in H_{\c}^{V[G]}$. Since $\P$ is $ccc$, there is some stage $\alpha<\delta$ in the iteration where $\vec a$ starts to appear as a $\P_\alpha$-name in the iteration. Since the book-keeping function has unbounded repetition, we may use the locally uplifting cardinal as in the proof of (\ref{theorem:forcingbflMP}) to see that it is dense for $\varphi^{H_\c}(\vec a)$ to be pushed. This makes $\varphi^{H_\c}(\vec a)$ $ccc$-necessary in $V_\delta[G_\beta]$ for some $\alpha \leq \beta<\delta$. Thus $\varphi(\vec a)^{H_{\c}}$ is true in $V[G]$, after the rest of the iteration, which is $ccc$, as desired.
\end{proof}

\bibliographystyle{alpha}
\bibliography{../ApplicationStuff/CV-RS-Pubs-Refs-extra/BIB}
\end{document}